\pgfplotsset{
  compat = newest,
  legend cell align = left,
  every axis plot/.append style = {
    black
  , mark = none
  , line width = 1pt
  },
  cycle list name = color linestyles*,
  /pgf/number format/1000 sep={},
  ticklabel style={font=\footnotesize},
  label style = {font=\footnotesize},
  every axis legend/.append style = {
    font = \footnotesize
  }
}
\newcommand{\SetAlgorithmStyle}{
  \setcounter{AlgoLine}{0}
  \SetKwData{Left}{left}\SetKwData{This}{this}\SetKwData{Up}{up}
  \SetKwInOut{Input}{Input}
  \SetKwInOut{Output}{Output}
  \ResetInOut{input}
  \SetKwComment{tcp}{//}{}
  \SetKwFor{For}{for}{}{end}
  \SetArgSty{}
  \DontPrintSemicolon
}
\tikzset{>=stealth'}
\newcommand*\ind[2]{_{#1}^{(#2)}}
\newcommand{\exclude}[1]{}
\newcommand{\ip}[2]{\langle #1, #2\rangle}
\newcommand{\fom}{\mathtt{fom}}
\newcommand{\qfom}{\mathtt{qfom}}
\newcommand{\gmres}{\mathtt{gmres}}
\newcommand{\qgmres}{\mathtt{qgmr}}
\newcommand{\qqgmres}{\mathtt{qqgmr}}
\newcommand{\spec}{\mathrm{spec}}
\newcommand{\conv}{\mathrm{conv}}
\newcommand{\diag}{\mathrm{diag}}
\newcommand{\spann}{\mathrm{span}}
\newcommand{\argmin}{\mathrm{argmin}}
\newcommand{\mathC}{\mathbb{C}}
\newcommand{\mathR}{\mathbb{R}}
\newcommand{\spK}{\mathcal{K}}
\newcommand{\spA}{\mathcal{A}}
\newcommand{\spL}{\mathcal{L}}
\definecolor{SunsetNavy}{HTML}{160A47}
\definecolor{SunsetNavyLL}{HTML}{D0CEDA}
\definecolor{SunsetPurpleD}{HTML}{430254}
\definecolor{SunsetPurple}{HTML}{600479}
\definecolor{SunsetPurpleL}{HTML}{9F68AE}
\definecolor{SunsetPurpleLL}{HTML}{CFB3D6}
\definecolor{SunsetPinkD}{HTML}{6C0A42}
\definecolor{SunsetPink}{HTML}{9B0F5F}
\definecolor{SunsetPinkL}{HTML}{C36F9F}
\definecolor{SunsetPinkLL}{HTML}{E1B7CF}
\definecolor{SunsetRedD}{HTML}{8C121A}
\definecolor{SunsetRed}{HTML}{C81B26}
\definecolor{SunsetRedL}{HTML}{DE767C}
\definecolor{SunsetRedLL}{HTML}{EEBABD}
\definecolor{SunsetOrangeD}{HTML}{A84815}
\definecolor{SunsetOrange}{HTML}{F1671F}
\definecolor{SunsetOrangeL}{HTML}{F59462}
\definecolor{SunsetOrangeLL}{HTML}{F9C2A5}
\newcommand{\af}[1]{\textcolor{SunsetPurple}{#1}}
\newcommand{\cw}[1]{\textcolor{SunsetRedL}{#1}}
\title{Krylov type methods exploiting properties of the quadratic numerical range\thanks{This version dated \today.
\funding{This work was supported in part by Deutsche Forschungsgemeinschaft through the collaborative research centre SFB-TRR55 }}}
\author{Andreas Frommer, Birgit Jacob, Karsten Kahl, Christian Wyss, Ian Zwaan\thanks{Bergische Universit\"at Wuppertal, Wuppertal, Germany \newline
  \mbox{(\{frommer,jacob,kkahl,wyss,zwaan\}@math.uni-wuppertal.de)}.
  }
}
\begin{document}
\maketitle

\begin{abstract} The quadratic numerical range $W^2(A)$ is a subset of the standard numerical range of a linear operator which still contains its spectrum. It arises naturally in operators which 
have a $2 \times 2$ block structure, and it consists of at most two connected components, none of which necessarily convex. The quadratic numerical range can thus reveal spectral gaps, and it can in particular indicate that the spectrum of an operator is bounded away from $0$. 

We exploit this property in the finite-dimensional setting to derive Krylov subspace type methods to solve the system $Ax = b$, in which the iterates arise as solutions of low-dimensional models of the operator whose quadratic numerical ranges is contained in $W^2(A)$. This implies that the iterates are always well-defined and that, as opposed to standard FOM, large variations in the approximation quality of consecutive iterates are avoided, although $0$ lies within the convex hull of the spectrum. We also consider GMRES variants which are obtained in a similar spirit. We derive theoretical results on basic properties of these methods, review methods on how to compute the required bases in a stable manner and present results of several numerical experiments illustrating improvements over standard FOM and GMRES.   
\end{abstract}

\section{Introduction}
It is well known that Krylov subspace methods for a linear system $Ax=b$ with a nonsingular 
matrix $A \in \mathC^{n \times n}$ tend to converge slowly or even diverge or fail in 
situations where $0$ lies in the ``interior'' of the spectrum $\sigma(A)$ of $A$. 
Specifically, if $0$ is contained in the numerical range (or field of values) of $A$, a convex
set which contains $\sigma(A)$, we know that methods based on a Galerkin variational 
characterization like FOM, the full orthogonalization method, can fail due to the 
non-existence of certain iterates which manifests itself numerically by huge variations in 
magnitude and associated stability problems. In methods which are based on residual 
minimization like GMRES, the generalized minimal residual method, stagnation can occur in such 
cases. Related to this, classical convergence theory for Krylov subspace methods, in particular for the non-Hermitian case, typically assumes that $0$ is not contained in the numerical range and then gets quantitative results on convergence speed in which the distance of the numerical range to 0 enters as a parameter, see, e.g., \cite{EisenstatElmanSchultz1983,SaadSchultz1986, Starke97} and the discussion and references in the books \cite{LiesenStrakos2013,Saad2003}.

In this paper we study modifications of the FOM method, and also of GMRES, which converge stably and smoothly when the {\em quadratic numerical range}, a subset of the standard numerical range, splits into two parts which do not contain 0. The quadratic numerical range arises naturally for matrices which have a canonical $2 \times 2$ block structure. Analgously to standard Krylov subspace methods, these modifications are also based on projections. By projecting onto a larger space than the Krylov subspace we manage to preserve the gap in the quadratic numerical range and thus shield the projected matrices away from singularity. At  the same time we do not require more matrix vector multiplications as in standard Krylov subspace methods, i.e.\ one per iteration. 

This paper is organized as follows: Section~\ref{numerical_range:sec} reviews those properties
of the numerical range and the FOM and GMRES method which are important for the sequel. 
Section~\ref{quadratic_fom:sec} first introduces the quadratic numerical range and then 
develops the new modified projection methods termed quadratic FOM and quadratic GMRES. This 
section also contains first elements of an analysis. In Section~\ref{algorithm:sec} we then 
discuss how the new methods can be realized as efficient algorithms before we give some numerical examples in Section ~\ref{numerics:sec}.

\section{Numerical range and FOM} \label{numerical_range:sec}
Regardless of the dimension, $n$, we will always denote by $\langle \cdot,\cdot \rangle$ the standard sesquilinear inner product on $\mathC^n$ and $\| \cdot \|$ the associated norm.
For a linear operator $A \in \mathC^{n \times n}$ the numerical range (or field of values) $W(A)$ is the set of all its Rayleigh quotients 
\[
W(A) = \{ \tfrac{\ip{Ax}{x}}{\ip{x}{x}}: x \in \mathC^n, x \neq 0 \} = \left\{ \ip{Ax}{x}: x \in \mathC^n, \| x \| = 1  \right\} .
\]

$W(A)$ is a compact convex set (see \cite{HornJohnson2013}, e.g.) which contains the spectrum $\spec(A)$. If $A$ is normal, $A^*A = AA^*$, then $W(A)$ is actually the convex hull of $\spec(A)$. For non-normal $A$, the numerical range $W(A)$ can be much larger than the convex hull of the spectrum. If for some $m \leq n$ the matrix $V = [v_1 \mid \cdots \mid v_m] \in \mathC^{n \times m}$ is an orthonormal matrix, i.e. $V^*V = I_m$, the identity on $\mathC^m$, then the numerical range of the ``projected'' matrix $V^*AV \in \mathC^{m \times m}$ is contained in that of $A$, since for all $y \in \mathC^m, y\neq 0$ we have $\ip{y}{y} = \ip{Vy}{Vy}$ and thus
\[
\tfrac{\ip{V^*AVy}{y}}{\ip{y}{y}} = \tfrac{\ip{AVy}{Vy}}{\ip{y}{y}} = \tfrac{\ip{AVy}{Vy}}{\ip{Vy}{Vy}} \in W(A).
\]

For future use we state this observation as a lemma.

\begin{lemma} \label{W_enclosure:lem} Let $A \in \mathC^{n \times n}$ be arbitrary and let $V \in \mathC^{n \times m}$ be orthonormal. Then
\[
W(V^*AV) \subseteq W(A).
\]
\end{lemma}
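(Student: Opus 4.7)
The plan is essentially to note that the computation already carried out in the paragraph immediately preceding the lemma is exactly the argument needed, and to package it cleanly. Concretely, I would fix an arbitrary element of $W(V^*AV)$, realize it as a Rayleigh quotient of $V^*AV$ at some nonzero $y \in \mathC^m$, and then rewrite that quotient as a Rayleigh quotient of $A$ at the nonzero vector $Vy \in \mathC^n$.

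The key algebraic step is the identity $\ip{Vy}{Vy} = \ip{y}{V^*Vy} = \ip{y}{y}$, which uses $V^*V = I_m$. This identity has two consequences I would use in sequence: first, since $y \neq 0$ we get $Vy \neq 0$, so $Vy$ is a legitimate test vector for the Rayleigh quotient of $A$; second, the denominator in the projected Rayleigh quotient coincides with the denominator of the corresponding Rayleigh quotient of $A$. Combined with the adjointness relation $\ip{V^*AVy}{y} = \ip{AVy}{Vy}$ in the numerator, this yields
\[
\frac{\ip{V^*AVy}{y}}{\ip{y}{y}} \;=\; \frac{\ip{AVy}{Vy}}{\ip{Vy}{Vy}} \;\in\; W(A),
\]
establishing the containment.

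There is no real obstacle here: the statement is essentially a one-line consequence of the definition of the numerical range together with the isometry property of the orthonormal matrix $V$. The only point to be a little careful about is verifying that $Vy \neq 0$ whenever $y \neq 0$ (so that $Vy$ qualifies as a test vector in the definition of $W(A)$), but this follows immediately from $\|Vy\|^2 = \ip{y}{y} = \|y\|^2$. No special structure of $A$ is used, which is consistent with the lemma being stated for arbitrary $A \in \mathC^{n \times n}$.
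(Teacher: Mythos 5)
Your proposal is correct and is essentially identical to the argument the paper gives in the paragraph immediately preceding the lemma (realize an element of $W(V^*AV)$ as a Rayleigh quotient at $y$, pass to the test vector $Vy$ for $A$, and use $V^*V = I_m$ for both the adjointness step and the equality of denominators). The only difference is that you explicitly note $Vy \neq 0$, which the paper leaves implicit; this is a reasonable bit of added care but not a different route.
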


We continue by summarizing the properties of two Krylov subspace methods, namely FOM \cite{Saad1981} GMRES \cite{SaadSchultz1986}, which are relevant for this work. Proofs and further details can be found in \cite{Saad2003}, e.g. 

A Krylov subspace method for solving the linear system
\[
Ax=b, \enspace A \in \mathC^{n \times n}, b \in \mathC^n,
\]
takes its $k$th iterate from the affine subspace $x^{(0)} + \spK^{(k)}(A,r^{(0)})$, where $r^{(0)} = b-Ax^{(0)}$ and
\[
\spK^{(k)}(A,r^{(0)}) = \spann\{r^{(0)},Ar^{(0)},\ldots,A^{k-1}r^{(0)}\}.
\]
Krylov subspaces are nested and the Arnoldi process (see \cite{Saad2003}, e.g.), iteratively computes an orthonormal basis $v^{(1)},v^{(2)},\ldots $ for 
these subspaces. Collecting the vectors into an orthonormal matrix $V^{(k)} = [v^{(1)} \mid \cdots \mid v^{(k)}]$, the Arnoldi process can be
summarized by the Arnoldi relation
\begin{equation} \label{Arnoldi_relation:eq}
AV^{(k)} = V^{(k+1)} \underline{H}^{(k)}, k=1,2,\ldots.
\end{equation}
where $\underline{H}^{(k)}  \in \mathC^{(k+1) \times k}$ collects the coefficients resulting from the orthonormalization process. It has upper Hessenberg structure. Denoting by $H^{(k)}$ the $k \times k$ matrix obtained from $\underline{H}^{(k)}$ by removing the last row, we see that
\[
H^{(k)} = (V^{(k)})^*AV^{(k)}.
\]
The full orthogonalization method (FOM) is the Krylov subspace method with iterate $x^{(k)}_{\fom}$ characterized variationally via
\[
x^{(k)}_{\fom} \in x^{(0)} + \spK^{(k)}(A,r^{(0)}), \enspace r^{(k)}_\fom = b-Ax^{(k)}_{\fom} \perp \spK^{(k)}(A,r^{(0)}),
\]
which gives
\[
x^{(k)}_{\fom} = x^{(0)} + V^{(k)} (H^{(k)})^{-1} (V^{(k)})^*  r^{(0)},
\]
provided $H^{(k)}$ is nonsingular. Note that since $v_1$ is a multiple of $r^{(0)}$ we have
\begin{equation} \label{e1:eq}
(V^{(k)})^*r^{(0)} = \|r^{(0)}\|e_1^k, 
\end{equation}
where $e_1^k$ denotes the first canonical unit vector in $\mathbb C^k$. 

For an arbitrary (nonsingular) matrix $A$, the matrix $H^{(k)}$ can become singular in which case the $k$-th FOM iterate does not exist.  
An important consequence of Lemma~\ref{W_enclosure:lem} is therefore that such a breakdown of FOM cannot occur if $0 \not \in W(A)$, and, moreover, that $H^{(k)}$ will have no eigenvalues with modulus smaller than the distance of $W(A)$ to $0$. On the other hand, if $0 \in W(A)$, even when $H^{(k)}$ is nonsingular, it can become arbitrarily ill-conditioned, which then typically yields large residuals for the corresponding iterates and which is observed in practice as irregular convergence behavior. 

We can interprete FOM as the method which for each $k$ builds a reduced model $H^{(k)}$ of dimension $k$ of the original matrix and 
then obtains its iterate $x^{(k)}_{\fom}$ by lifting the solution of the corresponding reduced system $H^{(k)} \xi_k = (V^{(k)})^*r^{(0)}$ back to the full space as a correction to the initial guess $x^{(0)}$, $x^{(k)}_{\fom} = x^{(0)} + V^{(k)}\xi_k$. This interpretation will serve as a guideline for our development of the ``quadratic'' FOM method in section~\ref{quadratic_fom:sec}. 

The generalized minimal residual method (GMRES) is the Krylov subspace meth\-od with iterate $x^{(k)}_{\gmres}$ characterized variationally via
\[
x^{(k)}_{\gmres} \in x^{(0)} + \spK^{(k)}(A,r^{(0)}), \enspace  r^{(k)}_\gmres = b-Ax^{(k)}_{\gmres} \perp A \cdot \spK^{(k)}(A,r^{(0)}),
\]
This implies that the residual $b-Ax^{(k)}_{\gmres}$ is smallest in norm among all possible residuals $b-Ax$ with $x  \in x^{(0)} + \spK^{(k)}(A,r^{(0)})$, i.e.\ $x^{(k)}_\gmres$ solves the least squares problem
\[
x^{(k)}_\gmres = \argmin_{x \in x^{(0)}+\mathcal{K}^{(k)}(A,r^{(0)})} \| b - Ax\| = x^{(0)} + \argmin_{y \in \mathcal{K}^{(k)}(A,r^{(0)})} \| r^{(0)}-Ay \|.
\]
To obtain an efficient algorithm it is important to see that this $n \times k$ least squares problem can be reduced 
to a $(k+1)\times k$ system due to the Arnoldi relation \eqref{Arnoldi_relation:eq}: We have that $x^{(k)}_{\gmres} = x\ind{}{0}+V^{(k)} \xi^{(k)}$ where $\xi^{(k)}$ solves 
\begin{equation} \label{reduced_ls:eq}
\xi^{(k)}= \argmin_{\xi \in \mathC^k} \| (V^{(k+1)})^* r^{(0)} - \underline{H}^{(k)} \xi \|,
\end{equation}
where $(V^{(k+1))^*}r^{(0)} = \|r^{(0)}\|e_1^{k+1}$. 

In case that $H^{(k)}$ is nonsingular, one can use the normal equation for \eqref{reduced_ls:eq} to characterize
$\xi_k = (\hat{H}^{(k)})^{-1} e_1^k$, where
\begin{equation} \label{reduced_model_gmres:eq}
\hat{H}^{(k)} = H^{(k)} + |h_{k+1,k}|^2((H^{(k)})^{-*}e_k)e_k^*, \mbox { where } h_{k+1,k} \mbox{ is the } (k+1,k) \mbox{ entry  of } \underline{H}^{(k)}.
\end{equation}
This means that the GMRES approach constructs a reduced model $\hat{H}^{(k)}$ which differs by the FOM model by a matrix of rank 1.
The eigenvalues of $\hat{H}^{(k)}$ are called the  
 {\em harmonic Ritz values} of $A$ w.r.t.\ $\spK^{(k)}(A,r^{(0)})$, i.e.\ the values $\mu$ for which 
\[
A^{-1}x - \tfrac{1}{\mu}x  \perp A\spK^{(k)}(A,r^{(0)}) \enspace \mbox{ for some } x \in A\spK^{(k)}(A,r^{(0)}), x \neq 0.
\]
They are the inverses of the Ritz values of $A^{-1}$ w.r.t\ the subspace $A\spK(A,r^{(0)})$ which implies
\[
\mu^{-1} \in W(A^{-1}).
\]
With $\rho$ denoting the numerical radius of $A^{-1}$, i.e.\ $\rho = \max \{ | \omega |: \omega \in W(A^{-1}) \}$ we see that 
$|\mu | \geq \rho^{-1}$. In this sense, as opposed to FOM, the GMRES approach shields the eigenvalues of the reduced model $\hat{H}^{(k)}$ away from $0$. Note that if $H^{(k)}$ is singular, GMRES stagnates, i.e.\ $x^{(k)}_\gmres = x^{(k-1)}_\gmres$.

\section{Quadratic numerical range, QFOM and QGMRES} \label{quadratic_fom:sec}
We now assume that $A  \in \mathC^{n \times n}$ has a ``natural'' block decomposition of the form 
\begin{equation} \label{A_block_structure:eq}
A  = \begin{bmatrix} A_{11} & A_{12} \\ A_{21} & A_{22} \end{bmatrix} \enspace \mbox{ with } A_{ij} \in \mathC^{n_i \times n_j}, i,j=1,2, \, n_1+n_2 = n, \, n_1,n_2\ge 1.
\end{equation}
All vectors $x$ from $\mathC^{n}$ are endowed with the same block structure
\[
  x = \begin{bmatrix} x_1 \\ x_2 \end{bmatrix}, \enspace x_i \in \mathC^{n_i}, i=1,2.
\]
The definition of the quadratic numerical range goes back to \cite{LangerTretter1998}, where it was introduced as a tool to
localize spectra of block operators in Hilbert space. 

\begin{definition} \label{ranges:def}
The quadratic numerical range $W^2$ of $A$ is given as 
\[
W^2(A) = \bigcup_{\|x_1\| = \|x_2\| = 1} \spec\left( \begin{bmatrix} x_1^*A_{11}x_1 & x_1^*A_{12}x_2 \\ x_2^*A_{21}x_1 & x_2^*A_{22}x_2   \end{bmatrix} \right).
\]
\end {definition}

The following basic properties are, e.g., proved in \cite{TretterBook2008}

\begin{lemma} \label{W2_properties:lem} We have
\begin{itemize}
\item[(i)] $W^2(A)$ is compact,
\item[(ii)] $W^2(A)$ has at most two connected components, 
\item[(iii)] $\spec(A) \subseteq W^2(A) \subseteq W (A),$
\item[(iv)] If $n_1, n_2\ge 2$, then $W(A_{11}), W(A_{22}) \subseteq W^2(A)$.
\end{itemize}
\end{lemma}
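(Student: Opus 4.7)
My plan is to treat the four parts in a different order than listed, from easiest to subtlest. Part (iii) and (iv) come from explicit constructions, (i) is a continuous image argument, and (ii) requires a mild monodromy/branched-cover consideration which I regard as the main obstacle.

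For the upper inclusion in (iii), $W^2(A)\subseteq W(A)$, I would exhibit, for any unit vectors $x_1,x_2$ and any eigenpair $(\lambda,\alpha)$ of the associated $2{\times}2$ matrix $M(x_1,x_2)$, a unit vector $y\in\mathC^n$ with $y^*Ay=\lambda$. The right choice is $y=\alpha_1\begin{bmatrix}x_1\\0\end{bmatrix}+\alpha_2\begin{bmatrix}0\\x_2\end{bmatrix}$; since $\|x_1\|=\|x_2\|=\|\alpha\|=1$ and the two ``block'' vectors are orthogonal, $\|y\|=1$ and a short expansion shows $y^*Ay=\alpha^*M(x_1,x_2)\alpha=\lambda$. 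For $\spec(A)\subseteq W^2(A)$, given $Av=\lambda v$ with $v=(v_1,v_2)^\top$ and $v_1,v_2\neq0$, I would set $x_i=v_i/\|v_i\|$, $\alpha_i=\|v_i\|$ and verify from the block form of $Av=\lambda v$ that $M(x_1,x_2)\alpha=\lambda\alpha$. The degenerate case $v_1=0$ or $v_2=0$ is handled by picking the free $x_i$ arbitrarily; then $M(x_1,x_2)$ becomes triangular with $\lambda$ on its diagonal. Part (iv) uses the same block-triangularization trick: given $\mu=x_1^*A_{11}x_1\in W(A_{11})$, the spectrum of $M(x_1,x_2)$ contains $\mu$ iff $(x_1^*A_{12}x_2)(x_2^*A_{21}x_1)=0$. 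Because $n_2\ge 2$, the orthogonal complement of $A_{12}^*x_1$ in $\mathC^{n_2}$ contains a unit vector $x_2$, which makes $M(x_1,x_2)$ upper triangular with $\mu$ as a diagonal entry; the inclusion $W(A_{22})\subseteq W^2(A)$ is symmetric and uses $n_1\ge 2$.

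For (i) I would write the parameter space $\Omega=S^{2n_1-1}\times S^{2n_2-1}$, which is compact, and use that the entries of $M(x_1,x_2)$ are continuous in $(x_1,x_2)$. The eigenvalues of a $2{\times}2$ matrix depend continuously on its entries as an unordered pair, so the set-valued map $(x_1,x_2)\mapsto\spec(M(x_1,x_2))$ is upper semicontinuous with finite fibres; equivalently, the ``graph''
\[
\widetilde{\Omega}=\bigl\{(x_1,x_2,\lambda)\in\Omega\times\mathC:\lambda^2-\mathrm{tr}\,M(x_1,x_2)\,\lambda+\det M(x_1,x_2)=0\bigr\}
\]
is a closed, bounded, hence compact subset of $\Omega\times\mathC$, and $W^2(A)$ is its image under the continuous projection onto $\mathC$.

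The hardest part is (ii). The idea is to reuse the set $\widetilde{\Omega}$: it is a branched double cover of $\Omega$ via the first projection, branched precisely on the discriminant locus $\{(x_1,x_2):(\mathrm{tr}\,M)^2=4\det M\}$. Since the complex spheres $S^{2n_i-1}$ are connected for $n_i\ge 1$, $\Omega$ is connected, and a standard argument on branched covers shows that $\widetilde{\Omega}$ has at most two connected components: away from the discriminant the cover is a genuine $2$-to-$1$ local homeomorphism, so each component is either a trivially split sheet or else picks up both sheets through the monodromy around or through the branch locus. Projecting $\widetilde{\Omega}$ continuously onto its second factor yields $W^2(A)$, and the continuous image of a space with at most two components has at most two components. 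The key subtlety to handle carefully here is that although the two eigenvalue functions $\lambda_\pm(x_1,x_2)$ are not globally single-valued on $\Omega$, the unordered pair is, and this is exactly what makes $\widetilde{\Omega}$ rather than the (ill-defined) individual ``sheet'' images the right object to work with.
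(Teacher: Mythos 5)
The paper does not prove this lemma at all; it simply cites Tretter's book \cite{TretterBook2008}, so there is no ``paper's proof'' to compare against. Taking your argument on its own terms:

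Parts (iii), (iv) and (i) are clean and correct. For $W^2\subseteq W$ your choice $y=\bigl[\begin{smallmatrix}\alpha_1 x_1\\ \alpha_2 x_2\end{smallmatrix}\bigr]$ gives $\|y\|=1$ and $y^*Ay=\alpha^*M(x_1,x_2)\alpha=\lambda$, as claimed; for $\spec(A)\subseteq W^2(A)$ the normalization $x_i=v_i/\|v_i\|$, $\alpha_i=\|v_i\|$ does satisfy $M\alpha=\lambda\alpha$, and the degenerate case $v_i=0$ makes $M$ triangular. In (iv) the observation that $\mu=x_1^*A_{11}x_1$ is an eigenvalue of $M$ iff $\beta\gamma=0$ is exactly right, and $n_2\ge 2$ is precisely what guarantees a unit $x_2\perp A_{12}^*x_1$. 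For (i), $\widetilde\Omega$ is closed in $\Omega\times\mathbb{C}$, bounded because the entries of $M$ are bounded uniformly on $\Omega$, hence compact, and $W^2(A)$ is its continuous image.

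Part (ii) has the right central idea --- work with the lift $\widetilde\Omega$ rather than with ill-defined single eigenvalue branches --- but the justification as written is too loose at the decisive step. You invoke ``a standard argument on branched covers'' and say each component ``picks up both sheets through the monodromy around or through the branch locus,'' but this implicitly assumes that $\Omega\setminus B$ (with $B$ the discriminant locus) is connected, which need not hold: a real hypersurface can separate $\Omega$. The robust version of your argument is the following. $\widetilde\Omega$ is a compact real algebraic subset of $\Omega\times\mathbb{C}$, hence locally connected, so its components $C$ are open in $\widetilde\Omega$ as well as closed. Then $\pi(C)$ is closed in $\Omega$ by compactness, and it is also open: at a non-branch point this is the local homeomorphism property you cite, while at a branch point $(\omega_0,\lambda_0)$ continuity of the eigenvalue pair forces the \emph{entire} fibre over any nearby $\omega$ to lie near $(\omega_0,\lambda_0)$ and hence in the open set $C$. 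Since $\Omega$ is connected, $\pi(C)=\Omega$ for every component $C$; as the fibres of $\pi$ have at most two points, there can be at most two components of $\widetilde\Omega$, and therefore at most two of $W^2(A)$. This is essentially the Langer--Tretter argument; I'd recommend replacing the monodromy language with the ``open-and-closed image'' argument, since the latter needs no hypothesis on the connectivity of $\Omega\setminus B$.
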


The following counterpart of Lemma~\ref{W_enclosure:lem} holds.

\begin{lemma} \label{W2_enclosure:lem} Let $A \in \mathC^{n \times n}$ have block structure \eqref{A_block_structure:eq} and assume that $V_1 \in \mathC^{n_1 \times m_1}, V_2 \in \mathC^{n_2 \times m_2}$ with $m_i \leq n_i, i=1,2$ have orthonormal columns. Put $V = [\begin{smallmatrix} V_1 & 0 \\ 0 & V_2 \end {smallmatrix} ] \in \mathC^{n \times m}$ with $m=m_1+m_2$. Then
\[
W^2(V^*AV) \subseteq W^2(A), \mbox{ where } V^*AV = \begin{bmatrix} V_1^*A_{11}V_1 & V_1^*A_{12}V_2 \\ V_2^*A_{21}V_1 & V_2^*A_{22}V_2 \end{bmatrix} \in \mathC^{m \times m}.
\]
\end{lemma}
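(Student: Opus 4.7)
My plan is to mimic the proof strategy used for Lemma \ref{W_enclosure:lem}, namely, to reduce each element of $W^2(V^*AV)$ to an element of $W^2(A)$ by an explicit substitution that carries unit vectors to unit vectors.

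First, I would verify the claimed block structure of $V^*AV$ by a direct block multiplication using $V = \mathrm{diag}(V_1,V_2)$. This is routine: the off-diagonal blocks of the product vanish because of the zero blocks in $V$, leaving exactly $V_i^* A_{ij} V_j$ in position $(i,j)$. Set $B = V^*AV$ and $B_{ij} = V_i^*A_{ij}V_j$.

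Next, I would pick an arbitrary element $\lambda \in W^2(B)$. By Definition \ref{ranges:def}, there exist $y_1 \in \mathC^{m_1}$ and $y_2 \in \mathC^{m_2}$ with $\|y_1\| = \|y_2\| = 1$ such that $\lambda$ is an eigenvalue of
\[
M_y = \begin{bmatrix} y_1^*B_{11}y_1 & y_1^*B_{12}y_2 \\ y_2^*B_{21}y_1 & y_2^*B_{22}y_2 \end{bmatrix}.
\]
I would then define $x_i = V_iy_i \in \mathC^{n_i}$ for $i=1,2$. Since $V_i$ has orthonormal columns, $\|x_i\|^2 = \ip{V_iy_i}{V_iy_i} = \ip{y_i}{V_i^*V_iy_i} = \ip{y_i}{y_i} = 1$, so $x_1,x_2$ are admissible in the definition of $W^2(A)$.

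Finally, I would compute the entries of the corresponding $2\times 2$ matrix for $A$ and observe that
\[
x_i^*A_{ij}x_j = y_i^*V_i^*A_{ij}V_jy_j = y_i^*B_{ij}y_j,
\]
so the $2\times 2$ matrix from Definition \ref{ranges:def} associated with $(x_1,x_2)$ coincides with $M_y$. Hence $\lambda \in \spec(M_y) \subseteq W^2(A)$, which completes the inclusion. The argument is essentially a direct change-of-variables; there is no serious obstacle, and the only thing to be a bit careful about is recording that orthonormality of the columns of $V_1$ and $V_2$ preserves norms and makes the identifications above work.
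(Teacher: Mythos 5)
Your proof is correct and is essentially identical to the paper's: both take arbitrary unit vectors $y_1,y_2$, set $x_i = V_i y_i$, note that orthonormality of $V_i$ gives $\|x_i\|=1$, and observe that the resulting $2\times 2$ matrices of block Rayleigh quotients coincide entrywise. You spell out the verification of the block structure of $V^*AV$ and the norm computation a bit more explicitly, but there is no difference in approach.
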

\begin{proof}
Let $y_i \in \mathC^{m_i}$ for $i=1,2$ with $\|y_i\| = 1$. Then $ x_i := V_{i}y_i$ 
satisfies $\|x_i\| = 1, i=1,2$, and since
\[
\begin{bmatrix} (y_1)^*V_1^*A_{11}V_1 y_1 & (y_1)^*V_1^*A_{12}V_2 y_2 \\ (y_2)^*V_2^*A_{21}V_1 y_1 & (y_2)^*V_2^*A_{22}V_2 y_2\end{bmatrix}
\ = \ 
\begin{bmatrix} x_1^*A_{11}x_1 & x_1^*A_{12}x_2 \\ x_2^*A_{21}x_1 & x_2^*A_{22}x_2   \end{bmatrix}
\]
we obtain $W^2(V^*AV) \subseteq W^2(A)$.
\end{proof}

Our approach is now to build a Krylov subspace type method where, as opposed to FOM, the iterates are obtained by inverting a reduced model of $A$ whose quadratic numerical range is contained in that of $A$. In this manner, if $0 \not \in W^2(A)$ with $\delta =
\min \{|\mu|: \mu \in W^2(A) \}$ denoting the distance of $0$ to $W^2(A)$, no eigenvalue of the reduced model will have modulus smaller than $\delta$. In cases where $0 \in W (A)$ and $0 \not \in W^2(A)$ this bears the potential of obtaining smoother and faster convergence than with FOM and, as it will turn out experimentally, also faster than with GMRES.

We project the Krylov subspace $\spK^{(k)}(A,r^{(0)})$ onto its first $n_1$ and last $n_2$ components, 
respectivley, 
denoted $\spK^{(k)}_1(A,r^{(0)}) \subseteq \mathC^{n_1}$ and $\spK^{(k)}_2(A,r^{(0)}) \subseteq \mathC^{n_2}$. Clearly,
\[
\spK^{(k)}(A,r^{(0)}) \subseteq \spK^{(k)}_1(A,r^{(0)}) \times \spK^{(k)}_2(A,r^{(0)}) =: \spK^{(k)}_\times(A,r^{(0)}),
\]
and $\dim \spK^{(k)}(A,r\ind{}{0}) \leq \dim \spK_{\times}^{(k)}(A,r^{(0)}) =: d_\times^{(k)} \leq 2k$. Note that the dimension $d_i^{(k)}$ of either $\spK^{(k)}_i(A,r^{(0)})$ may be less than $k$ and that $d\ind{\times}{k} = d\ind{1}{k} + d\ind{2}{k}$.

We can obtain an orthonormal basis for each of the $\spK\ind{i}{k}(A,r\ind{}{0})$ as the columns of the matrix $V\ind{i}{k}$ which arises from the QR-decomposition of the respective block of the matrix $V\ind{}{k}$ from the Arnoldi process, i.e.\ 
\begin{equation} \label{V_relations:eq}
V\ind{}{k}\!=\! \begin{bmatrix} V\ind{1}{k}R\ind{1}{k} \\ V\ind{2}{k}R\ind{2}{k} \end{bmatrix}, 
 V\ind{i}{k} \in \mathC^{n_i \times d\ind{i}{k}} \mbox{ orthonorm.}, \, R\ind{i}{k} \in \mathC^{d\ind{i}{k} \times k} \mbox{ upper triang.} 
\end{equation}

Note that with this definition of $V\ind{i}{k}$ we have the useful property that $V\ind{i}{k+1}$ arises from 
$V\ind{i}{k}$ by the addition of a new last column, just in the way $V\ind{}{k+1}$ arises from $V\ind{}{k}$, with the exception that the new last column could be empty, i.e.\ there is no new last column, when the last column of the $i$th block in $V^{(k)}$ is linearly dependent of the other columns. Similarly $R\ind{i}{k+1}$ arises from $R\ind{i}{k}$ by adding a new last column and a new last row (if it is not empty). 

We now introduce variational characterizations based on the space $\spK_{\times}^{(k)}(A,r^{(0)}).$

\subsection{QFOM}
{\em Quadratic FOM} imposes a Galerkin condition using  $\spK_{\times}^{(k)}(A,r^{(0)})$.

\begin{definition} \label{qfom:def} The $k$-th {\em quadratic FOM (``QFOM'') iterate} $x_{\qfom}^{(k)}$ is defined variationally through
\begin{equation} \label{qfom_variational:eq} 
x_{\qfom}^{(k)} \in x^{(0)} + \spK^{(k)}_\times(A,r^{(0)}),\enspace b-A x_{\qfom}^{(k)} \perp \spK_{(\times)}^{(k)}(A,r^{(0)}).
\end{equation}
\end{definition}

The columns of the matrix
\[
V_\times^{(k)} = \begin{bmatrix} V^{(k)}_1 & 0 \\ 0 &  V^{(k)}_2 \end{bmatrix}
\]
form an orthonormal basis of $\spK^{(k)}_\times(A,r^{(0)})$. Defining the reduced model $H_\times^{(k)}$ of $A$ as
\begin{equation} \label {W2_model:eq}
H_\times^{(k)} =  (V\ind{\times}{k})^* A V\ind{\times}{k}  = \begin{bmatrix} (V^{(k)}_1)^*A_{11}V^{(k)}_1 & (V^{(k)}_1)^*A_{12}V^{(k)}_2 \\ (V^{(k)}_2)^*A_{21}V^{(k)}_1 & (V^{(k)}_2)^*A_{22}V^{(k)}_2 \end{bmatrix}
\end{equation}
we see that if $H_\times^{(k)}$ is nonsingular, the QFOM iterate $x^{(k)}_{\qfom}$ according to Definition~\ref{qfom:def} exists and can be represented as 
\begin{equation} \label{qfom_representation:eq}
x_{\qfom}^{(k)} = x^{(0)} + V_\times^{(k)} (H_\times^{(k)})^{-1}  (V_\times^{(k)})^*   r^{(0)}. 
\end{equation}

Instead of \eqref{e1:eq} we now have
\begin{equation} \label{e2:eq}
(V_\times^{(k)})^*   r^{(0)} = \begin{bmatrix} \|r^{(0)}_1\| e^{d^{(k)}_1}_1 \\ \|r^{(0)}_2\|e^{d^{(k)}_2}_1 \end{bmatrix}, \mbox{ where } r^{(0)} = \begin{bmatrix} r^{(0)}_1 \\ r^{(0)}_2 \end{bmatrix}.
\end{equation}
If $H_\times^{(k)}$ is singular, the $k$-th QFOM iterate does not exist.
We will show in section~\ref{algorithm:sec} that computing $x^{(k)}_{\qfom}$ costs $k$ matrix-vector multiplications with $A$ plus additional arithmetic operations of order $\mathcal{O}(k^3)$. The cost is therefore the same as for standard FOM in terms of matrix-vector multiplications, and the additional cost is also of the same order (though with a larger constant).   

\subsection{Analysis of QFOM}

The following theorem summarizes some basic properties of QFOM. 

Recall that the {\em grade} of a vector $v$  with respect to a square matrix $A$  is the first index $g(v)$ for which $\spK^{(g(v))}(A,v) = \spK^{(g(v)+1)}(A,v)$. We know (see \cite{Saad2003}, e.g.) that 
then $\spK^{(g(v))}(A,v) = \spK^{(g(v)+i)}(A,v)$ for all $i \geq 0$ and that $A^{-1}v \in \spK^{(g(v))}(A,v)$, provided $A$ is nonsingular.

\begin{theorem} \label{QFOM_properties:thm} Let $A$ be nonsingular. 
Then
\begin{itemize}
  \item[(i)] [Finite termination]  There exists an index $k_{\max} \leq g(r\ind{}{0})$ such that $A^{-1}r\ind{}{0} \in \spK\ind{\times}{k_{\max}}(A,r\ind{}{0})$, and if $H^{(k_{\max})}_\times$ is nonsingular, $x_{\qfom}^{(k_{\max})}$ exists and $x_\qfom^{(k_{\max})} = A^{-1}b$.
  \item[(ii)] [Quadratic numerical range property] The inclusion $W^2(H_\times^{(k)} ) \subseteq W^2(A)$ holds for $k=1,\ldots,k_{\max}$, where the $2 \times 2$ block structure of $H_\times^{(k)}$ is given in \eqref{W2_model:eq}.
  \item[(iii)] [Existence] If $0 \not \in W^2(A)$, then $x_\qfom^{(k)}$ exists for $k=1,\ldots,k_{\max}$, i.e.\ $H_{k}^\times$ is nonsingular for 
  all $k=1,\ldots,k_{\max}$.
  \end{itemize}
\end{theorem}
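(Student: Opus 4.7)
My plan is to dispose of (ii) and (iii) first, since they are essentially immediate consequences of the results already in place, and then tackle (i) via a grade--plus--uniqueness argument.

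For (ii), the matrix $V_\times^{(k)}$ has exactly the block-diagonal structure required in Lemma~\ref{W2_enclosure:lem}: its $(1,1)$ and $(2,2)$ blocks $V_1^{(k)}, V_2^{(k)}$ are orthonormal by \eqref{V_relations:eq}, and $H_\times^{(k)} = (V_\times^{(k)})^* A V_\times^{(k)}$ is precisely the projected matrix computed in that lemma, as recorded in \eqref{W2_model:eq}. Applying the lemma immediately yields $W^2(H_\times^{(k)}) \subseteq W^2(A)$.

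For (iii), combine (ii) with Lemma~\ref{W2_properties:lem}(iii), now applied to $H_\times^{(k)}$, to obtain $\spec(H_\times^{(k)}) \subseteq W^2(H_\times^{(k)}) \subseteq W^2(A)$. If $0 \notin W^2(A)$, then $0 \notin \spec(H_\times^{(k)})$, so $H_\times^{(k)}$ is nonsingular and the closed-form expression \eqref{qfom_representation:eq} produces $x_\qfom^{(k)}$.

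For (i), the first step is locating $k_{\max}$. Since $\spK^{(k)}(A,r^{(0)}) \subseteq \spK_\times^{(k)}(A,r^{(0)})$ for every $k$, and since the grade property gives $A^{-1}r^{(0)} \in \spK^{(g(r^{(0)}))}(A, r^{(0)})$, we may take $k_{\max}$ to be the smallest index with $A^{-1} r^{(0)} \in \spK_\times^{(k_{\max})}(A, r^{(0)})$; this set is nonempty and bounded above by $g(r^{(0)})$, so $k_{\max} \leq g(r^{(0)})$ as required. Now set $x^\star = A^{-1}b = x^{(0)} + A^{-1}r^{(0)}$. By construction $x^\star \in x^{(0)} + \spK_\times^{(k_{\max})}(A,r^{(0)})$, and its residual $b - Ax^\star = 0$ is trivially orthogonal to $\spK_\times^{(k_{\max})}(A,r^{(0)})$, so $x^\star$ satisfies the Galerkin characterization \eqref{qfom_variational:eq}. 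Assuming $H_\times^{(k_{\max})}$ nonsingular, I would then invoke uniqueness: writing any candidate as $x^{(0)} + V_\times^{(k_{\max})} \xi$, the condition $b - A(x^{(0)} + V_\times^{(k_{\max})}\xi) \perp \spK_\times^{(k_{\max})}(A,r^{(0)})$ reduces, using $(V_\times^{(k_{\max})})^* V_\times^{(k_{\max})} = I$ and the definition of $H_\times^{(k_{\max})}$, to the single linear equation $H_\times^{(k_{\max})} \xi = (V_\times^{(k_{\max})})^* r^{(0)}$, which has a unique solution. Hence $x_\qfom^{(k_{\max})} = x^\star = A^{-1}b$.

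I do not anticipate substantial obstacles. The only slightly delicate point is to make the coordinate translation in (i) watertight, i.e.\ to confirm that the Galerkin condition on the affine subspace $x^{(0)} + \spK_\times^{(k_{\max})}(A,r^{(0)})$ really is equivalent to an invertible $k_\times^{(k_{\max})} \times k_\times^{(k_{\max})}$ linear system under the nonsingularity hypothesis, so that the exact solution and the QFOM iterate must coincide.
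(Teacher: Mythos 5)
Your proposal is correct and follows essentially the same route as the paper: (ii) is Lemma~\ref{W2_enclosure:lem} applied to the block-diagonal $V_\times^{(k)}$, (iii) combines (ii) with the spectral enclosure of Lemma~\ref{W2_properties:lem}(iii), and (i) exhibits $A^{-1}b$ as an element of the affine space satisfying the Galerkin condition \eqref{qfom_variational:eq} and concludes by uniqueness under nonsingularity of $H_\times^{(k_{\max})}$. The only cosmetic difference is that you take $k_{\max}$ as the smallest index with $A^{-1}r^{(0)} \in \spK_\times^{(k_{\max})}(A,r^{(0)})$ while the paper takes the smallest index for which the whole space $\spK^{(g)}(A,r^{(0)})$ is contained in $\spK_\times^{(k)}(A,r^{(0)})$; both are bounded by $g(r^{(0)})$ and satisfy the statement.
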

\begin{proof}
To show (i), let $g$ be the grade of $r\ind{}{0}$ w.r.t.\ $A$ and let $k_{\max}\leq g$ be the smallest index $k$ for which $\spK\ind{}{g}(A,r\ind{}{0}) \subseteq \spK\ind{\times}{k}(A,r\ind{}{0})$. Since $A$ is nonsingular, there exists $y^* \in \spK\ind{\times}{k_{\max}}(A,r^{(0)})$ with $Ay^* = r\ind{}{0}$, i.e.\ $y^* = A^{-1}r\ind{}{0}$. As a consequence, $x^* = A^{-1}b = x\ind{}{0}+ y^* \in x\ind{}{0} + \spK\ind{\times}{k_{\max}}(A,r\ind{}0{})$  
%
satisfies the variational characterization~\eqref{qfom_variational:eq} from Definition~\ref{qfom:def} just as $x\ind{\qfom}{k_{\max}}$ does. If  $H^{(k_{\max})}_\times$ is
nonsingular there is exactly one vector from $x^{(0)}+\spK^{(k_{\max})}_{\times}(A,r^{(0)})$ which satisfies \eqref{qfom_variational:eq} which gives $x_{\qfom}^{(k_{\max})} = x^*$.

Part (ii) follows directly from Lemma~\ref{W2_enclosure:lem}. Finally, part (iii) is an immediate consequence of part (ii) and the spectral enclosure property stated as Lemma~\ref{W2_properties:lem}(iii).
\end{proof}

More far-reaching results seem to be difficult to obtain. In particular, the absence of a polynomial interpolation property---which we discuss in the sequel---makes it impossible to follow established 
concepts from standard Krylov subspace theory.  

The FOM iterates satisfy a polynomial interpolation property: We know that $(H^{(k)})^{-1} = q(H^{(k)})$ where $q$ is the polynomial of degree at most $k-1$ which interpolates the function $z \to z^{-1}$ on the eigenvalues in the Hermite sense, i.e.\ up to the $j-1$st deriviative if the multiplicity of the eigenvalue in the minimal polynomial is $j$; see \cite{Higham2008}. We have that
\[
V^{(k)}(H^{(k)})^{-1}(V^{(k)})^*r^{(0)} = V^{(k)}q(H^{(k)})(V^{(k)})^*r^{(0)} = q(A) r^{(0)},
\]
where the last, important equality holds because $V^{(k)}(V^{(k)})^*$ represents the orthogonal projector on $\spK_m(A,r^{(0)})$, thus implying that for all powers $j=0,\ldots,k-1$ we have $V^{(k)} (H^{(k)})^j (V^{(k)})^* r^{(0)} = V^{(k)}((V^{(k)})^*AV^{(k)})^j(V^{(k)})^*r^{(0)} = A^j r^{(0)}$. As a consequence
\begin{equation} \label{interpolation_property:eq}
x^{(k)}_\fom = x^{(0)} + q(A)r^{(0)}.
\end{equation}
Since $\hat{H}^{(k)}$ differs from $H^{(k)}$ only in its last column, the same argument as above shows that 
an analogue of \eqref{interpolation_property:eq} holds for the GMRES iterates, where now $q$ interpolates on the spectrum of $\hat{H}^{(k)}$. This interpolation property is very helpful in the analysis of the FOM and GMRES method, but there is no analog for QFOM. Indeed, while we can express
$(H_\times^{(k)})^{-1}$ as a polynomial $q$ of degree at most $d_1^{(k)} + d_2^{(k)} -1 \leq 2k-1$ in $H_\times^{(k)}$, the matrix $V_\times^{(k)} (V_\times^{(k)})^*$ is an orthogonal projector on $K_\times^{(k)}(A,r^{(0)})$ which contains $K^{(k)}(A,r^{(0)})$ but not necessarily the higher powers $A^i r^{(0)}$ for $i \geq k$. Therefore,
we cannot conclude that $(V_\times^{(k)}) (H_\times^{(k)})^i (V_\times^{(k)})^*r^{(0)} = (V_\times^{(k)}) ((V_\times^{(k)})^*AV_\times^{(k)})^i (V_\times^{(k)})^*r^{(0)}$
would be equal to $A^ir^{(0)}$ for $i \geq k$, and therefore, since the degree of the polyonomial $q$ is likely 
to be larger than $k-1$ don't get $V_\times^{(k)} q(H_\times^{(k)})(V_\times^{(k)})^*r^{(0)} = 
q(A)r^{(0)}$.

To finish this section, we look at the very extreme case in which $W^2(A)$ consists of just one or two points, and we show that in this case QFOM obtains the solution after just one iteration in a larger number of cases than standard FOM or GMRES does. 
So assume $W^2(A) = \{ \lambda_1,\lambda_2\}$, where $\lambda_1 = \lambda_2$ is allowed. 

\begin{lemma} \label{W2_extreme:lem} 
Let $n_1, n_2\ge 2$.
$W^2(A) = \{\lambda_1, \lambda_2\}$ iff 
\begin{equation} \label{form_A:eq}
A = \begin{bmatrix} \lambda_1 I & A_{12} \\A_{21} & \lambda_2 I \end{bmatrix}, \mbox{ where } A_{12} = 0 \mbox{ or } A_{21} = 0,
\end{equation}
(up to a permutation of $\lambda_1$, $\lambda_2$ on the diagonal).
\end{lemma}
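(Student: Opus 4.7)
The forward direction $(\Leftarrow)$ is a direct computation: if $A$ has the form \eqref{form_A:eq}, then for every pair of unit vectors $x_1, x_2$ the $2\times 2$ matrix from Definition~\ref{ranges:def} is triangular with diagonal entries $\lambda_1, \lambda_2$ (lower or upper triangular, according to which off-diagonal block vanishes), so its spectrum is $\{\lambda_1,\lambda_2\}$ independently of the choice of $x_1, x_2$, and hence $W^2(A) = \{\lambda_1, \lambda_2\}$.

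For $(\Rightarrow)$ my plan has three steps. First, since $n_1, n_2 \ge 2$, Lemma~\ref{W2_properties:lem}(iv) gives $W(A_{ii}) \subseteq W^2(A) = \{\lambda_1, \lambda_2\}$. Because $W(A_{ii})$ is convex, every convex subset of $\{\lambda_1,\lambda_2\}$ is a singleton, so $W(A_{ii}) = \{\mu_i\}$ with $\mu_i \in \{\lambda_1, \lambda_2\}$. A matrix whose numerical range is a single point $\mu$ satisfies $x^*(A_{ii} - \mu I)x = 0$ for all $x$, which by polarization forces $A_{ii} = \mu_i I$.

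Second, abbreviate $f(x_1, x_2) = x_1^* A_{12} x_2$ and $g(x_1, x_2) = x_2^* A_{21} x_1$. With $A_{ii}=\mu_i I$, the $2\times 2$ matrix of Definition~\ref{ranges:def} has constant trace $\mu_1+\mu_2$ and determinant $\mu_1 \mu_2 - fg$. Its two eigenvalues lie in $\{\lambda_1,\lambda_2\}$, so their sum belongs to $\{2\lambda_1, 2\lambda_2, \lambda_1 + \lambda_2\}$ and must uniformly equal the constant $\mu_1+\mu_2$. If $\lambda_1 \neq \lambda_2$, the option $\mu_1+\mu_2 = 2\lambda_j$ would force both eigenvalues to equal $\lambda_j$ for every $(x_1, x_2)$, yielding $W^2(A)=\{\lambda_j\}$ and contradicting $\lambda_{3-j}\in W^2(A)$. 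Hence $\mu_1 + \mu_2 = \lambda_1 + \lambda_2$, forcing $\{\mu_1, \mu_2\} = \{\lambda_1, \lambda_2\}$ and making the product of eigenvalues $\lambda_1\lambda_2 = \mu_1\mu_2$, so $fg(x_1,x_2)=0$ for all unit $x_1, x_2$. The degenerate case $\lambda_1 = \lambda_2 =: \lambda$ is simpler: $\mu_i = \lambda$ and both eigenvalues equal $\lambda$, so once more $fg \equiv 0$ on unit pairs.

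Third, I must pass from $fg \equiv 0$ on the unit spheres to $A_{12} = 0$ or $A_{21} = 0$. Using $fg(\alpha x_1, \beta x_2) = |\alpha|^2|\beta|^2 fg(x_1, x_2)$, the vanishing extends by scaling to all of $\mathC^{n_1}\times\mathC^{n_2}$. Now $f$ is a bilinear form in the formal variables $(\bar x_1, x_2)$ while $g$ is bilinear in $(x_1, \bar x_2)$; the two involve disjoint sets of polynomial indeterminates, so in the polynomial ring in the $2(n_1+n_2)$ complex indeterminates $x_1,\bar x_1,x_2,\bar x_2$ we have $f\cdot g \equiv 0$ in an integral domain, forcing $f \equiv 0$ or $g \equiv 0$, i.e.\ $A_{12} = 0$ or $A_{21} = 0$. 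This last step is the only genuinely non-trivial point; Steps 1 and 2 amount to bookkeeping around Lemma~\ref{W2_properties:lem}(iv) and the trace/determinant of the defining $2\times 2$ matrix.
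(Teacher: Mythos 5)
Your proof is correct, and it takes a genuinely different route from the paper in the necessity direction. Both proofs start identically: Lemma~\ref{W2_properties:lem}(iv) together with convexity of the numerical range forces $W(A_{ii})=\{\mu_i\}$ and hence $A_{ii}=\mu_i I$. From there the two arguments diverge. The paper proceeds by contradiction: assuming both $A_{12}$ and $A_{21}$ nonzero, it perturbs a pair of witnesses, $z_i = x_i + \epsilon y_i$, to produce a single normalized pair for which $\beta = z_1^*A_{12}z_2$ and $\gamma = z_2^*A_{21}z_1$ are simultaneously nonzero, and then reads off a contradiction from the quadratic characterization~\eqref{quadratic:eq}. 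You instead avoid contradiction and perturbation entirely: the trace and determinant of the $2\times 2$ pencil matrix give you at once that $\{\mu_1,\mu_2\}=\{\lambda_1,\lambda_2\}$ and $fg \equiv 0$ on unit pairs, after which you scale out the normalization and invoke the algebraic fact that $f$ and $g$, living on disjoint sets of formal indeterminates $(\bar x_1, x_2)$ and $(x_1, \bar x_2)$, cannot be zero divisors in the polynomial ring unless one of them vanishes identically. What the paper's route buys is self-containment---it uses nothing beyond an explicit $\epsilon$-perturbation---while your route buys conceptual cleanliness (no choice of $\epsilon$, no ``for $\epsilon$ small enough'') at the cost of appealing to the standard but nontrivial fact that a function of $(x,\bar x)$ vanishing identically on $\mathC^n$ vanishes as a formal polynomial in independent variables. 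Your step~2 also makes explicit a trace-counting argument ruling out $\mu_1=\mu_2$ when $\lambda_1\neq\lambda_2$, which the paper treats more tersely.
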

\begin{proof} For $x_i \in \mathC^{n_i}, \|x_i\| = 1, i=1,2$ denote 
\[
\alpha = x_1^*A_{11}x_1, \beta = x_1^*A_{12}x_2, \gamma = x_2^*A_{21}x_1, \delta = x_2^*A_{22}x_2.
\]
Then $\lambda \in W^2(A)$ iff 
\begin{equation} \label{quadratic:eq}
(\lambda-\alpha)(\lambda-\delta)-\beta\gamma = 0
\end{equation}
for $\alpha,\beta,\gamma,\delta$ associated with such $x_1,x_2$.
Now, if $A$ is of the form~\eqref{form_A:eq}, then $\beta\gamma = 0$, $\alpha=\lambda_1$ and $\delta=\lambda_2$, which immediately gives that  \eqref{form_A:eq} is sufficient to get $W^2(A) = \{ \lambda_1,\lambda_2\}$.

To prove necessity, assume $W^2(A)= \{\lambda_1, \lambda_2\}$. Since $W(A_{ii}) \subseteq W^2(A)$ for $i=1,2$ by Lemma~\ref{W2_properties:lem}(iv) and since the numerical range is convex, this implies $W(A_{11})=\{\mu_1\}$, $W(A_{22})=\{\mu_2\}$ with $\mu_1,\mu_2\in \{\lambda_1,\lambda_2\}$. Consequently $A_{11} = \mu_1 I, A_{22} = \mu_2 I$.
For a proof by contradiction assume now that both $A_{12}$ and $A_{21}$ are nonzero. Then there exist normalized vectors $x_1,x_2, y_1,y_2$ such that $x_1^*A_{12}x_2 \neq 0$ and $y_2^*A_{21} y_1 \neq 0$. For $\epsilon \in \mathR$, consider $z_1 = x_1 + \epsilon y_1, z_2 = x_2+ \epsilon y_2$. Then $z_1^*A_{12}z_2 \neq 0$ for $\epsilon \neq 0$ small enough and 
\[
z_2^*A_{21}z_1 = x_2^*A_{21}x_1 + \epsilon (x_2^*A_{21}y_1 + y_2^*A_{21}x_1) + \epsilon^2 y_2^*A_{21}y_1.  
\]
This quadratic function in $\epsilon$ is nonzero for sufficiently small $\epsilon \neq 0$. 
Thus, for $\epsilon \neq 0$ sufficiently small, taking the normalized versions of $z_1,z_2$ we get that the corresponding $\beta$ and $\gamma$ are both nonzero. 
Consequently the expression
\[ (\lambda-\mu_1)(\lambda-\mu_2)-\beta\gamma\]
is nonzero for $\lambda=\mu_1\in W^2(A)$, but zero at the same time by \eqref{quadratic:eq}.
Thus at least one of the matrices $A_{12}, A_{21}$ is zero. 
It follows that $W^2(A)=\{\mu_1,\mu_2\}$ and consequently
$\mu_1=\lambda_1$ and $\mu_2=\lambda_2$ up to a permutation of $\lambda_1$, $\lambda_2$.
\end{proof}

With these preparations we obtain the following result. 

\begin{theorem} Assume that $n_1, n_2\ge 2$ and $0\notin W^2(A) = \{\lambda_1,\lambda_2\}$ and consider the linear system
\[
A x = b.
\]
Without loss of generality we assume that iterations start with the initial guess $x^{(0)} = 0$.
We also denote by $x^{*} = A^{-1}b$ the solution of the system. Then
\begin{itemize}
\item[(i)] $x_\fom^{(1)} = x^{*}$ if $b$ is an eigenvector of $A$. In all other cases, $x_\fom^{(2)} = x^{*}$.
\item[(ii)] $x_\qfom^{(1)} = x^{*}$ if $A_ {12}b_2$ is collinear to $b_1$ (or $0$). In all other cases, $x_\qfom^{(2)} = x^{*}$. 
\end{itemize}
\end {theorem}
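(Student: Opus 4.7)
The plan is to first apply Lemma~\ref{W2_extreme:lem} to bring $A$ into the explicit form
\[
A = \begin{bmatrix} \lambda_1 I & A_{12} \\ 0 & \lambda_2 I \end{bmatrix}
\]
(the other sub-case, $A_{12}=0$ with nonzero $A_{21}$, being analogous, with the roles of $b_1$ and $b_2$ interchanged). Since $\lambda_1,\lambda_2 \neq 0$ the matrix is nonsingular, and block inversion gives
\[
A^{-1}b \;=\; \begin{bmatrix} \lambda_1^{-1}b_1 - \lambda_1^{-1}\lambda_2^{-1}A_{12}b_2 \\ \lambda_2^{-1}b_2 \end{bmatrix}, \qquad Ab \;=\; \begin{bmatrix} \lambda_1 b_1 + A_{12}b_2 \\ \lambda_2 b_2 \end{bmatrix}.
\]
With $x^{(0)}=0$, the finite-termination argument of Theorem~\ref{QFOM_properties:thm}(i) (and its standard FOM analog) reduces each assertion to two checks: that $A^{-1}b$ lies in the relevant Krylov space, and that the associated reduced matrix is nonsingular.

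For (i), I would exploit that the minimal polynomial of $A$ divides $(z-\lambda_1)(z-\lambda_2)$, so the grade $g(b)$ of $b$ with respect to $A$ is at most $2$, and $g(b)=1$ exactly when $b$ is an eigenvector of $A$. In either case $\spK^{(g(b))}(A,b)$ is $A$-invariant, so the spectrum of $H^{(g(b))}$ is contained in $\{\lambda_1,\lambda_2\}$; hence $H^{(g(b))}$ is nonsingular and standard FOM finite termination yields $x_\fom^{(g(b))} = x^*$, which is exactly statement (i).

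For (ii), I would read off the block projections of $\spK^{(1)}(A,b) = \spann\{b\}$ and $\spK^{(2)}(A,b) = \spann\{b,Ab\}$ directly from the formula for $Ab$ above, obtaining
\[
\spK_\times^{(1)}(A,b) = \spann\{b_1\} \times \spann\{b_2\}, \qquad \spK_\times^{(2)}(A,b) = \spann\{b_1, A_{12}b_2\} \times \spann\{b_2\}.
\]
Comparison with the formula for $A^{-1}b$ then shows that $A^{-1}b \in \spK_\times^{(1)}$ exactly when $A_{12}b_2$ is collinear to $b_1$ (including the zero case), while $A^{-1}b \in \spK_\times^{(2)}$ holds unconditionally. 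The hypothesis $0 \notin W^2(A)$ together with Theorem~\ref{QFOM_properties:thm}(iii) guarantees nonsingularity of $H_\times^{(k)}$ for $k=1,2$; the uniqueness part of Theorem~\ref{QFOM_properties:thm}(i) then forces the QFOM iterate to coincide with $x^*$ in the respective case.

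The mechanics themselves are routine. The one point requiring care is the bookkeeping for the two sub-cases of Lemma~\ref{W2_extreme:lem}: the stated condition ``$A_{12}b_2$ collinear to $b_1$'' pertains to the sub-case $A_{21}=0$ isolated above, and the sub-case $A_{12}=0$ should be covered by the symmetric condition ``$A_{21}b_1$ collinear to $b_2$'', obtained by swapping block indices $1 \leftrightarrow 2$ throughout. I would state this correspondence explicitly at the start of the proof so that the two sub-cases of the lemma are treated uniformly.
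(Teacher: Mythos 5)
Your proposal is correct, and it follows the same skeleton as the paper's proof: reduce $A$ to the block-triangular form via Lemma~\ref{W2_extreme:lem}, treat the sub-case $A_{21}=0$ and dispose of the other by symmetry, and settle part (i) through the grade of $b$, using that the minimal polynomial divides $(z-\lambda_1)(z-\lambda_2)$. Part (ii) is where you genuinely deviate. The paper forms the reduced model $H_\times^{(1)}$ explicitly, inverts the resulting $2\times 2$ triangular matrix and exhibits the first QFOM iterate, so that the collinearity condition emerges as the requirement that the rank-one projector $\tfrac{1}{\|b_1\|^2}b_1b_1^*$ fix $A_{12}b_2$; you never form the reduced model, but instead check exactly when $A^{-1}b$ lies in $\spK_\times^{(1)}(A,b)$, respectively $\spK_\times^{(2)}(A,b)$, and then combine nonsingularity of $H_\times^{(k)}$ (Theorem~\ref{QFOM_properties:thm}(iii), available since $0\notin W^2(A)$) with the uniqueness argument from the proof of Theorem~\ref{QFOM_properties:thm}(i). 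Your route is shorter, treats the degenerate cases $b_1=0$ or $b_2=0$ uniformly (the paper only remarks that ``a similar observation holds''), and yields the converse at $k=1$ for free, since the iterate lies in $x^{(0)}+\spK_\times^{(1)}$ while $x^*$ does not when collinearity fails; the paper's computation, in exchange, displays the actual first iterate and so shows concretely how it differs from $x^*$. Your explicit bookkeeping for the sub-case $A_{12}=0$, $A_{21}\neq 0$ (condition ``$A_{21}b_1$ collinear to $b_2$'') matches the paper's implicit convention that the second case is ``completely analogous''. One small caveat: Theorem~\ref{QFOM_properties:thm}(i) as stated concerns only $k_{\max}$, so either note that its uniqueness argument (zero residual of $x^*$ plus nonsingular $H_\times^{(k)}$) applies verbatim to any $k$ with $A^{-1}b\in\spK_\times^{(k)}$, or observe that in your two cases one actually has $k_{\max}=1$ and $k_{\max}=2$, respectively, so the theorem applies directly.
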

\begin{proof} 
By Lemma~\ref{W2_extreme:lem} we know that $A$ has the form 
\[
A = \begin{bmatrix} \lambda_1 I & A_{12} \\ 0 & \lambda_2 I \end{bmatrix} \mbox{ or }
A = \begin{bmatrix} \lambda_1 I & 0 \\ A_{21} & \lambda_2 I \end{bmatrix},
\]
and we focus on the first case. The second case can be treated in a completely analogous manner.
We first note that if $\lambda_1 \neq \lambda_2$, the eigenvectors to the eigenvalue $\lambda_1$ are of the form $\left[ \begin{smallmatrix} x_1 \\ 0  \end{smallmatrix} \right]$ and the eigenvectors to the eigenvalue $\lambda_2$ are given by $\left[ \begin{smallmatrix}  (\lambda_2-\lambda_1)^{-1}A_{12}x_2 \\  x_2  \end{smallmatrix} \right]$. If $\lambda_1 = \lambda_2$, all vectors of the form 
$\left[ \begin{smallmatrix}  x_1 \\ x_2 \end{smallmatrix} \right]   $ with $A_{12}x_2 = 0$ are eigenvectors.  The theorem thus asserts that the situations where FOM gets the solution in the first iteration is a true subset of the situations in which QFOM obtains the solution in its first iteration. 

To proceed, we observe that the minimal polynomial of $A$ is $p(z) = (z-\lambda_1)(z-\lambda_2)$ in all cases except for the case where $\lambda_1 = \lambda_2$ and $A_{12} = 0$, i.e.\ when $A = \lambda_1 I$ with minimal polynomial $p(z) = (z-\lambda_1)$. Since $x_\fom^{(1)} \in \spK^{(1)}(A,b)$, which is spanned by $b$, FOM obtains the solution $x^{*}$ in the first iteration exactly in the case where $b$ is an eigenvector of $A$. If $b$ is not an eigenvector of $A$, then the minimal polynomial is $p(z) = (z-\lambda_1)(z-\lambda_2)$ so that the grade of $b$ is 2, and FOM obtains the solution §$x^{*}$ in its second iteration. 


If $b_1 \neq 0$ and $b_2 \neq 0$, the first iteration of QFOM obtains $x^{(1)}_\qfom$ as 
\begin{eqnarray*}
x^{(1)}_\qfom &=& \begin{bmatrix} \tfrac{1}{\|b_1\|}b_1 & 0 \\ 0 & \tfrac{1}{\|b_2\|}b_2 \end{bmatrix}
\begin{bmatrix} \lambda_1 & \tfrac{1}{\|b_1\|\, \|b_2\|}b_1^*A_{12}b_2 \\ 0 & \lambda_2  \end{bmatrix}^{-1} \begin{bmatrix} {\|b_1\|} \\ {\|b_2\|} \end{bmatrix} \\
&=& 
\begin{bmatrix} \tfrac{1}{\|b_1\|}b_1 & 0 \\ 0 & \tfrac{1}{\|b_2\|}b_2 \end{bmatrix}
\begin{bmatrix} \tfrac{1}{\lambda_1} & -\tfrac{1}{\lambda_1\lambda_2}\tfrac{1}{\|b_1\|\, \|b_2\|}b_1^*A_{12}b_2 \\ 0 & \tfrac{1}{\lambda_2}  \end{bmatrix} \begin{bmatrix} {\|b_1\|} \\ {\|b_2\|} \end{bmatrix} \\
&=& 
\begin{bmatrix} \tfrac{1}{\lambda_1}(b_1-\tfrac{1}{\lambda_2}\tfrac{1}{\|b_1\|^2}b_1b_1^*A_{12}b_2) \\ \tfrac{1}{\lambda_2} b_2 \end{bmatrix},
\end{eqnarray*}
which is equal to the solution 
\[
x^{*} = \left[ \begin{matrix} \tfrac{1}{\lambda_1}(b_1-\tfrac{1}{\lambda_2}A_{12}b_2) \\ \tfrac{1}{\lambda_2} b_2 \end{matrix} \right]
\]
exactly when the projector $\tfrac{1}{\|b_1\|^2}b_1b_1^*$ acts as the identity on $A_{12}b_2$, i.e.\ when $A_{12}b_2$ is zero or collinear to $b_1$. A similar observation holds if $b_1 = 0$ or $b_2 = 0$. In all other cases, by Theorem~\ref{QFOM_properties:thm} we have $x^{(2)}_\qfom = x^{*}$ since the grade of $b$ then equals 2. 
\end{proof}

\subsection{QGMRES and QQGMRES}
In principle, we can proceed in a manner similar to QFOM to derive a ``quadratic'' GMRES method. Variationally, its iterates $x\ind{\qgmres}{k}$ would be characterized by
\begin{equation} \label{full_qgmres:eq}
x\ind{\qgmres}{k} \in x^{(0)} + \spK^{(k)}_\times(A,r^{(0)}), \enspace b-Ax\ind{\qgmres}{k} \perp A \spK^{(k)}_\times(A, r^{(0)}),
\end{equation}
which is equivalent to minimizing the norm of the residual $\|b-Ax\|$ for $x \in x^{(0)} + K_\times^{(k)}(A,r^{(0)})$. Thus, as for standard GMRES, we can get $x^{(k)}_\qgmres$ as  
$x^{(0)} + V_\times^{(k)} \eta_k$ where $\eta_k$ solves the least squares problem
\begin{equation} \label{ls_qgmres:eq}
\eta_k = \argmin_{\eta \in \mathC^{d\ind{\times}{k}}} \| r^{(0)}- AV^{(k)}_\times\eta \|.
\end{equation}
However, as opposed to standard GMRES, it is not possible to recast this $n \times d\ind{\times}{k}$ least squares problem into one with a reduced 
first dimension, since an analogon to the Arnoldi relation \eqref{Arnoldi_relation:eq} does not hold for the product spaces $\mathcal{K}_\times^{(k)}(A,r^{(0)})$. In particular, for $x^{(k)} \in x^{(0)} + \spK^{(k)}_\times(A,r^{(0)})$, the residual $r^{(k)} = r^{(0)} - Ax^{(k)}$ need not be contained in
$\mathcal{K}\ind{\times}{k+1}(A,r^{(0)})$. This fact prevents approaches based on the variational characterization \eqref{full_qgmres:eq} to be realized with cost depending exclusively on $k$ and not on $n$.

As an alternative, we thus suggest an approach similar to truncated GMRES (see \cite{Saad2003}, e.g.).  We project the $n \times d\ind{\times}{k}$ least squares problem \eqref{ls_qgmres:eq} onto a $d\ind{\times}{k+1} \times d\ind{\times}{k}$ least squares problem by minimizing, instead of the whole residual $\|b-Ax^{(k)}\|$, only its orthogonal projection on $\mathcal{K}^{(k+1)}_\times(A,r^{(0)})$.

\begin{definition} The $k$-th {\em quadratic quasi GMRES (``QQGMRES'')} iterate $x^{(k)}_{\qqgmres}$ is the solution of the least squares problem
\begin{equation} \label{ls1:eq}
x^{(k)}_{\qqgmres} = \argmin_{x \in x^{(0)} + \spK\ind{\times}{k}(A,b)} \| (V\ind{\times}{k+1})^*(b - Ax) \|.
\end{equation}
\end{definition}

Computationally, we have that $x^{(k)}_{\qqgmres} = x^{(0)} + V_\times^{(k)} \zeta_k$, where $\zeta_k$ solves the $d\ind{\times}{k+1} \times d\ind{\times}{k}$ least squares problem
\begin{equation} \label{ls2:eq}
\zeta_k = \argmin_{\zeta \in \mathC^{d\ind{\times}{k}}} \| (V^{(k+1)}_\times)^*r^{(0)} - (V^{(k+1)}_\times)^* A V_\times^{(k)} \zeta \|,
\end{equation}
where
\begin{equation} \label{ls_reduced_model:eq}
\underline{H}\ind{\times}{k} = (V\ind{\times}{k+1})^*  A V\ind{\times}{k}  = \begin{bmatrix} (V\ind{1}{k+1})^*A_{11}V\ind{1}{k} & (V\ind{1}{k+1})^*A_{12}V\ind{2}{k} \\ (V\ind{2}{k+1})^*A_{21}V\ind{1}{k} & (V\ind{2}{k+1})^*A_{22}V\ind{2}{k} \end{bmatrix}
\end{equation}
and where the structure of $(V_{k+1}^\times)^*r^{(0)}$ is given in \eqref{e2:eq}. 

\subsection{Analysis of QGMRES and QQGMRES}

As for QFOM, there is no polynomial interpolation property for QGMRES nor for QQGMRES. We can again present only simple first elements of an analysis. 

As solutions to least squares problems, the iterates $x\ind{\qgmres}{k}$ and $x\ind{\qqgmres}{k}$ are always defined. They are uniquely defined in case of QGMRES, since 
$AV\ind{\times}{k}$ has full rank since $V\ind{\times}{k}$ has full rank. For QQMRES we have

\begin{proposition} The matrix $\underline{H}\ind{\times}{k}$ from \eqref{ls_reduced_model:eq} has full rank if $0 \not \in W^2(A)$.
\end{proposition}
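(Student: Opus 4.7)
The plan is to prove full column rank of $\underline{H}_\times^{(k)}$ by reducing the question to the nonsingularity of $H_\times^{(k)}$, which we already control through Theorem~\ref{QFOM_properties:thm}(iii). First I would observe the relationship between $V_\times^{(k)}$ and $V_\times^{(k+1)}$: by construction, $V_i^{(k+1)}$ arises from $V_i^{(k)}$ by appending at most one new column, so each column of $V_\times^{(k)} = \mathrm{diag}(V_1^{(k)}, V_2^{(k)})$ appears (verbatim, in the appropriate block position) as a column of $V_\times^{(k+1)} = \mathrm{diag}(V_1^{(k+1)}, V_2^{(k+1)})$. Hence there is a selection matrix $S \in \mathbb{C}^{d_\times^{(k+1)} \times d_\times^{(k)}}$ with columns drawn from the identity such that
\[
V_\times^{(k+1)} S = V_\times^{(k)}, \qquad S^*S = I_{d_\times^{(k)}}.
\]

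Next I would left-multiply $\underline{H}_\times^{(k)}$ by $S^*$ to extract the square reduced model:
\[
S^* \underline{H}_\times^{(k)} = S^* (V_\times^{(k+1)})^* A V_\times^{(k)} = (V_\times^{(k+1)} S)^* A V_\times^{(k)} = (V_\times^{(k)})^* A V_\times^{(k)} = H_\times^{(k)}.
\]
Therefore if $\underline{H}_\times^{(k)} \zeta = 0$ for some $\zeta \in \mathbb{C}^{d_\times^{(k)}}$, then $H_\times^{(k)} \zeta = S^* \underline{H}_\times^{(k)} \zeta = 0$.

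Finally I would invoke nonsingularity of $H_\times^{(k)}$: by Lemma~\ref{W2_enclosure:lem}, $W^2(H_\times^{(k)}) \subseteq W^2(A)$, and by Lemma~\ref{W2_properties:lem}(iii), $\spec(H_\times^{(k)}) \subseteq W^2(H_\times^{(k)})$. Since $0 \notin W^2(A)$ by assumption, $0 \notin \spec(H_\times^{(k)})$, i.e., $H_\times^{(k)}$ is nonsingular. This forces $\zeta = 0$, which shows that $\underline{H}_\times^{(k)}$ has full column rank—which is the appropriate notion of full rank since $d_\times^{(k+1)} \geq d_\times^{(k)}$.

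There is no real obstacle here: the only step needing care is verifying that the block-diagonal structure of $V_\times^{(k)}$ and $V_\times^{(k+1)}$ permits the selection matrix $S$ to be built (i.e., ensuring that the columns of each $V_i^{(k)}$ really are the leading columns of the corresponding $V_i^{(k+1)}$), and this is guaranteed by the construction in \eqref{V_relations:eq} and the remark immediately following it. Everything else is an immediate consequence of results already available in the excerpt.
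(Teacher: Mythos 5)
Your proposal is correct and takes essentially the same route as the paper: both arguments reduce full column rank of $\underline{H}_\times^{(k)}$ to the nonsingularity of $H_\times^{(k)}$, which follows from the quadratic numerical range enclosure (Theorem~\ref{QFOM_properties:thm}(iii) via Lemmas~\ref{W2_enclosure:lem} and~\ref{W2_properties:lem}(iii)). The paper phrases the reduction more tersely --- ``$\underline{H}_\times^{(k)}$ is obtained from $H_\times^{(k)}$ by complementing it with two rows, one after each block'' --- whereas you make the same observation explicit by introducing the selection matrix $S$ with $V_\times^{(k+1)}S = V_\times^{(k)}$ and $S^*\underline{H}_\times^{(k)} = H_\times^{(k)}$; this is a cleaner formalization of the same idea but not a different proof.
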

\begin{proof}
The matrix $\underline{H}\ind{\times}{k}$ is obtained from $H\ind{\times}{k}$ by complementing it with two rows,  
one after each block, and $H\ind{\times}{k}$ is nonsingular by Theorem~\ref{QFOM_properties:thm}(iii). Thus, $\underline{H}\ind{\times}{k}$ has full rank $d\ind{\times}{k} = d\ind{1}{k} + d\ind{2}{k}$. 
\end{proof}

QGMRES and QQGMRES also both have a finite termination property.
\begin{proposition} Let $k_{\max} \leq g(r\ind{}{0})$ be as in the proof of Theorem~\ref{QFOM_properties:thm}. 
Then $x\ind{\qgmres}{k_{\max}} = A^{-1}b$. Provided $\underline{H}\ind{\times}{k_{\max}}$ has full rank, we also have $x\ind{\qqgmres}{k_{\max}} = A^{-1}b$.
\end{proposition}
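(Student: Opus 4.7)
The plan is to use the key fact established in the proof of Theorem~\ref{QFOM_properties:thm}(i), namely that $A^{-1}r^{(0)} \in \spK_\times^{(k_{\max})}(A,r^{(0)})$. This means the exact correction $y^* = A^{-1}r^{(0)}$ already lies in the QGMRES/QQGMRES search space, so in both cases we just need to argue that the least squares characterization picks it up.

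For QGMRES, I would first observe that the variational condition \eqref{full_qgmres:eq} is equivalent to solving the least squares problem \eqref{ls_qgmres:eq}. Writing $y^* = V_\times^{(k_{\max})} \eta^*$, we see that $A V_\times^{(k_{\max})} \eta^* = A y^* = r^{(0)}$, so $\eta^*$ attains the value zero for the objective in \eqref{ls_qgmres:eq}. Since $AV_\times^{(k_{\max})}$ has full column rank (because $V_\times^{(k_{\max})}$ does and $A$ is nonsingular), $\eta^*$ is the unique minimizer, giving $x_{\qgmres}^{(k_{\max})} = x^{(0)} + V_\times^{(k_{\max})} \eta^* = x^{(0)} + A^{-1}r^{(0)} = A^{-1}b$.

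For QQGMRES, the argument is analogous but uses the projected least squares problem \eqref{ls2:eq}. With the same $\eta^* =: \zeta^*$ satisfying $V_\times^{(k_{\max})} \zeta^* = A^{-1}r^{(0)}$, we have $AV_\times^{(k_{\max})} \zeta^* = r^{(0)}$. Applying $(V_\times^{(k_{\max}+1)})^*$ to both sides yields
\[
(V_\times^{(k_{\max}+1)})^* A V_\times^{(k_{\max})} \zeta^* = (V_\times^{(k_{\max}+1)})^* r^{(0)},
\]
so $\zeta^*$ attains the value zero in \eqref{ls2:eq}. Under the hypothesis that $\underline{H}_\times^{(k_{\max})} = (V_\times^{(k_{\max}+1)})^* A V_\times^{(k_{\max})}$ has full column rank, $\zeta^*$ is the unique minimizer, so $x_{\qqgmres}^{(k_{\max})} = x^{(0)} + V_\times^{(k_{\max})} \zeta^* = A^{-1}b$.

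The whole argument is quite short because the heavy lifting was done in Theorem~\ref{QFOM_properties:thm}(i); there is no real obstacle. The only subtle point is keeping the two uniqueness conditions straight: for QGMRES, full column rank comes for free from nonsingularity of $A$ and orthonormality of the columns of $V_\times^{(k_{\max})}$, whereas for QQGMRES it must be assumed as a hypothesis since $\underline{H}_\times^{(k_{\max})}$ is a projected object and its full-rank property is not automatic at the terminal index (even though Theorem~\ref{QFOM_properties:thm}(iii) and the preceding proposition guarantee full rank whenever $0 \notin W^2(A)$ and $k<k_{\max}$, the terminal case is what the hypothesis covers).
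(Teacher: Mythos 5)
Your proof is correct and takes essentially the same approach as the paper: both rely on the fact from the proof of Theorem~\ref{QFOM_properties:thm}(i) that $A^{-1}r^{(0)} \in \spK_\times^{(k_{\max})}(A,r^{(0)})$, observe that the corresponding coefficient vector attains residual value zero in the relevant least squares problem, and then invoke the same uniqueness arguments (full column rank of $AV_\times^{(k_{\max})}$ for QGMRES, the full-rank hypothesis on $\underline{H}_\times^{(k_{\max})}$ for QQGMRES). The only minor slip is the closing remark that the preceding proposition covers only $k<k_{\max}$; in fact it applies to $k=1,\ldots,k_{\max}$ whenever $0\notin W^2(A)$, so the explicit full-rank hypothesis here is needed to handle the case $0\in W^2(A)$, not the terminal index specifically.
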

\begin{proof} 
As in the proof of Theorem~\ref{QFOM_properties:thm}, we have that
$x^* = A^{-1}b = x^{(0)} + y^*$ with $y^* = A^{-1}r^{(0)}$ being contained in $\spK_\times^{(k_{\max})}(A,r^{(0)})$.
So $x^*$ satisfies the variational characterization~\eqref{full_qgmres:eq} with residual norm 0, and as such it is unique. This implies that $x^*$ is identical to the  QGMRES iterate $x\ind{\qgmres}{k_{\max}}$. For QQGMRES, we write $y^* \in \spK_\times^{(k_{\max})}(A,r^{(0)})$ as $y^* = V\ind{\times}{k}\zeta$. This $\zeta$ is a solution of the least squares problem \eqref{ls2:eq}, yielding the minimal value 0 for the resiudal norm. If $\underline{H}\ind{\times}{k_{\max}}$ has full rank, the solution of the least squares problem \eqref{ls2:eq} is unique. And since the QQGMRES iterate $x\ind{\qqgmres}{(k_{\max}}$ is obtained by solving this least squares problem, it is equal to $x^*$.
\end{proof}

Trivially, QGMRES gets iterates $x\ind{\qgmres}{k}$ whose residuals $r\ind{\qgmres}{k}$ are smaller in norm than 
$r\ind{\gmres}{k}$, i.e.\ the residual of the iterate $x\ind{\gmres}{k}$ of standard GMRES, since QGMRES minimizes the residual norm over a larger subspace. Moreover, since QQGMRES minimizes over the same subspace as QGMRES, but minimizes the norm of the projection of the residual rather than the norm of the residual itself, we also have that $\|r\ind{\qgmres}{k}\| \leq \|r\ind{\qqgmres}{k}\|$. Finally, note that we cannot expect the relation $\|r\ind{\qqgmres}{k}\| \leq \|r\ind{\gmres}{k}\|$ to hold in general.

\section{Algorithmic aspects} \label{algorithm:sec}

An important practical question is how one can compute $V^{(k)}_\times$ and $H^{(k)}_\times$ efficiently and in a stable manner. Interestingly, for the special case where $A_{21} = I$ and $A_{22} = 0$, which arises in the linearization of quadratic eigenvalue problems, this question has been treated in many papers, and recently the {\em two-level orthogonal Arnoldi method} has emerged as a cost-efficient and at the same time stable algorithm; see \cite{KressnerRoman2014, LuSuBai2016, MeerbergenPerez2018}. 
In the following, we describe how the two-level orthogonal Arnoldi method generalizes to general $2\times 2$ block matrices with minor changes. Generalizing the stability analysis is not as straightforward, and a detailed analysis is beyond the scope of this paper. The main idea is that we refrain from directly computing the orthogonal Arnoldi basis $V\ind{}{k}$ from \eqref{Arnoldi_relation:eq}, but rather compute/update the orthonormal bases $V\ind{1}{k},V\ind{2}{k}$ of its block components while at the same time updating $H\ind{\times}{k}$.  

Assume that no breakdown occurs and no deflation is necessary. Then we have (see \eqref{V_relations:eq}) 
\[
V\ind{}{k} =  \begin{bmatrix} V\ind{1}{k}R\ind{1}{k} \\ V\ind{2}{k} R\ind{2}{k} \end{bmatrix},
\]
where the $V\ind{i}{k}$ have $k$ orthonormal columns, and the $R\ind{i}{k} \in \mathC^{k \times k}$ are upper triangular. Since the columns of $V\ind{}{k}$ are orthonormal, too, this implies
\begin{equation} \label{sum_R:eq}
   (R\ind{1}{k})^*R\ind{1}{k} + (R\ind{2}{k})^*R\ind{2}{k} = (V\ind{}{k})^*V\ind{}{k} = I,
\end{equation}
showing that the matrix $\left[ \begin{smallmatrix} R\ind{1}{k} \\ R\ind{2}{k} \end{smallmatrix} \right] \in \mathC^{2k \times k}$ also has orthonormal columns. 
Writing the Arnoldi relation \eqref{Arnoldi_relation:eq} in terms of the block components gives 
\begin{equation} \label{recursion:eq}
    \begin{split}
        A_{11} V\ind{1}{k} R\ind{1}{k} + A_{12} V\ind{2}{k} R\ind{2}{k}
        &= V\ind{1}{k+1} R\ind{1}{k+1} \underline H\ind{}{k} =: V\ind{1}{k+1} \underline{H}\ind{1}{k},  \\
        A_{21} V\ind{1}{k} R\ind{1}{k} + A_{22} V\ind{2}{k} R\ind{2}{k}
        &= V\ind{2}{k+1} R\ind{2}{k+1} \underline H\ind{}{k} =: V\ind{2}{k+1} \underline{H}\ind{2}{k}, 
    \end{split}
\end{equation}
where the matrices
\[
\underline{H}\ind{i}{k} := R\ind{i}{k+1} \underline H\ind{}{k} \in \mathC^{(k+1)\times k},\qquad i=1,2,
\]
are upper Hessenberg.

The relation \eqref{recursion:eq} reveals that $V\ind{i}{k+1}$ can be obtained as an update of $V\ind{i}{k}$ by adding a new last column, and $\underline H\ind{i}{k}$ as an update of $\underline H\ind{i}{k-1}$ by adding a new last column and a new last row. Thus, the new column of $V\ind{i}{k+1}$ arises from the orthonormalization of the last column of $A_{i1} V\ind{1}{k} R\ind{1}{k} + A_{i2} V\ind{2}{k} R\ind{2}{k}$ against all columns of $V\ind{i}{k}$ and it is nonzero. The upper-Hessenberg matrix $\underline H\ind{1}{k}$ is obtained from $\underline H\ind{1}{k-1}$ by first adding a new last row of zeros and then adding a new last column holding the coefficients from the orthonormalization. 
To obtain a viable computational scheme, it remains to show that 
$R\ind{i}{k+1}$ as well as $\underline H\ind{}{k}$ (which we need to get the QFOM or QGMRES iterates) 
can also be obtained from these quantities. We do so by establishing how to get them as updates from  
$H\ind{}{k-1}$ and $R\ind{i}{k}$, noting that in the very first step we have 
\[
R\ind{i}{1} = \|b_i\|,\qquad V\ind{i}{1} = b_i/\|b_i\|,\qquad i=1,2,
\]
unless $b_i = 0$ in which case we let the corresponding $R\ind{i}{1}$ be zero and let $V\ind{i}{1}$ be a random unitary vector.


For $k > 1$ we write
\begin{equation*}
    R\ind{i}{k+1} =
    \begin{bmatrix}
        R\ind{i}{k} & r\ind{i}{k+1} \\
        0 & \rho\ind{i}{k+1}
    \end{bmatrix}
    \quad\text{and}\quad
    \underline H\ind{}{k} =
    \begin{bmatrix}
        \underline H\ind{}{k-1} & h\ind{}{k} \\
        0 & \eta\ind{}{k}
    \end{bmatrix},
\end{equation*}
where $R\ind{i}{k}$ and $\underline H\ind{}{k-1}$ are known, and the remaining quantities are to be determined.
Since $\underline H\ind{i}{k}$ equals
\begin{equation}\label{HK:eq}
    R\ind{i}{k+1} \underline H\ind{}{k}
    = \begin{bmatrix}
        R\ind{i}{k} \underline H\ind{}{k-1} & R\ind{i}{k} h\ind{}{k} + \eta\ind{i}{k} r\ind{i}{k+1} \\
        0 & \eta\ind{}{k} \rho\ind{i}{k+1}
    \end{bmatrix}
    = \begin{bmatrix}
        \underline H\ind{i}{k-1} & h\ind{i}{k} \\
        0 & \eta\ind{i}{k}
    \end{bmatrix},
\end{equation}
it follows, using \eqref{sum_R:eq}, that
\begin{eqnarray*}
    [(R\ind{1}{k})^* \; 0] \underline H \ind{1}{k} + [(R\ind{2}{k})^* \;0] \underline H\ind{2}{k}
    &=& \big( (R\ind{1}{k})^* [R\ind{1}{k}\; r\ind{1}{k+1}] + (R\ind{2}{k})^* [R\ind{2}{k}\; r\ind{2}{k+1}] \big) \underline H\ind{}{k} \\
    &=& [I\;  0] \underline H\ind{}{k}
    \, = \, H\ind{}{k}.
\end{eqnarray*}
Hence, we see that 
\begin{equation} \label{h_update:eq}
h\ind{}{k} = (R\ind{1}{k})^*  h\ind{1}{k} + (R\ind{2}{k})^* h\ind{2}{k}, 
\end{equation}
which allows for the computation of $h\ind{}{k}$ from known quantities.
Once $h\ind{}{k}$ is known, \eqref{HK:eq} can be used to compute
\[
    \widetilde r\ind{i}{k+1} = \eta\ind{}{k} r\ind{i}{k+1} = h\ind{i}{k} - R\ind{i}{k} h\ind{}{k},
\]
at which point $\eta\ind{}{k}$ and the $\rho\ind{i}{k}$ are the only remaining  quantities to be determined. 
Letting $\eta\ind{}{k}$ be real valued (and nonnegative) allows its computation in at least two different ways.
The first is to consider the bottom right entry of \eqref{sum_R:eq} which gives 
\begin{eqnarray*}
    (\eta\ind{}{k})^2
    &=& \|\eta\ind{}{k} r\ind{1}{k+1}\|^2 + |\eta\ind{}{k} \rho\ind{1}{k+1}|^2 + \|\eta\ind{}{k} r\ind{2}{k+1}\|^2 + |\eta\ind{}{k} \rho\ind{2}{k+1}|^2 \\
    &=& \|\widetilde r\ind{1}{k+1}\|^2 + |\eta\ind{1}{k}|^2 + \|\widetilde r\ind{2}{k+1}\|^2 + |\eta\ind{2}{k}|^2.
\end{eqnarray*}
The second possibility is to determine $\eta\ind{}{k}$ from the $(k+1,k+1)$ entry of the equality $(\underline H\ind{}{k})^* \underline H \ind{}{k} = (\underline H\ind{1}{k})^* \underline H\ind{1}{k} + (\underline H\ind{2}{k})^* \underline H\ind{2}{k}$, which results in
\begin{equation*}
    (\eta\ind{}{k})^2 + \| h\ind{}{k}\|^2
    = \|h\ind{1}{k}\|^2 + |\eta\ind{1}{k}|^2 + \| h\ind{2}{k}\|^2 + |\eta\ind{2}{k}|^2,
\end{equation*}
using \eqref{sum_R:eq}. The first method may be preferred, since it guarantees that the computed $(\eta\ind{}{k})$ is nonnegative, even with roundoff errors.
Once $\eta\ind{}{k}$ has been determined, we get $\rho\ind{i}{k}$ as $\rho\ind{i}{k} = \eta\ind{i}{k}/\eta\ind{}{k}$ from \eqref{HK:eq}. Putting everything together yields the following proposition.

\begin{proposition}
In iteration $k$, the quantities $V\ind{i}{k+1}, R\ind{i}{k+1}$ and $\underline H\ind{i}{k}$ as well as $\underline H\ind{}{k}$ can be obtained from those of iteration $k-1$ at cost comparable to one matrix-vector multiplication with $A$, $2k$ vector scalings and additions with vectors of length $n$ and additional $\mathcal O(k^2)$ arithmetic operations.
\end {proposition}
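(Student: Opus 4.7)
The plan is to verify the cost claim by walking through the update formulae derived in the preceding discussion and tallying the operations at each stage. All quantities from iteration $k-1$, namely $V\ind{i}{k}$, $R\ind{i}{k}$, $\underline H\ind{i}{k-1}$, and $\underline H\ind{}{k-1}$, are assumed available, and no breakdown or deflation is to be handled.

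First, using the block recursion \eqref{recursion:eq}, I would assemble $u_i := V\ind{i}{k} r\ind{i}{k}$, with $r\ind{i}{k}$ the last column of $R\ind{i}{k}$, at cost $O(k)$ vector operations of length $n_i$ per block. Stacking $u_1, u_2$ into a length-$n$ vector and multiplying by $A$ is exactly one matrix-vector multiplication with $A$, and produces the last column of the combined right-hand side of \eqref{recursion:eq}. Modified Gram--Schmidt then orthogonalizes the top block against $V\ind{1}{k}$---yielding the entries of $h\ind{1}{k}$, the scalar $\eta\ind{1}{k}$, and the new column of $V\ind{1}{k+1}$---and analogously in the bottom block. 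Each block orthogonalization uses $k$ inner products and $k$ axpys on vectors of length $n_i$, which combined contribute a cost comparable to $2k$ vector scalings and additions on length-$n$ vectors.

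Once $h\ind{i}{k}$ and $\eta\ind{i}{k}$ are in hand, the identity \eqref{h_update:eq} gives $h\ind{}{k}$ at cost $O(k^2)$ since each $R\ind{i}{k}$ has at most $k$ rows and columns; then $\widetilde r\ind{i}{k+1} = h\ind{i}{k} - R\ind{i}{k} h\ind{}{k}$ costs a further $O(k^2)$, $\eta\ind{}{k}$ is obtained from a handful of norms of length-$O(k)$ vectors via the closed form derived from the bottom-right entry of \eqref{sum_R:eq}, and $\rho\ind{i}{k+1} = \eta\ind{i}{k}/\eta\ind{}{k}$ together with $r\ind{i}{k+1} = \widetilde r\ind{i}{k+1}/\eta\ind{}{k}$ add only $O(k)$ further scalar operations. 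Appending these newly computed columns and rows to $R\ind{i}{k}$ and $\underline H\ind{}{k-1}$ completes $R\ind{i}{k+1}$ and $\underline H\ind{}{k}$ at no extra asymptotic cost. Summing the contributions gives the advertised total of one matrix-vector multiplication with $A$, roughly $2k$ vector scalings and additions on length-$n$ vectors, and $O(k^2)$ additional scalar arithmetic. The argument is essentially bookkeeping; the only point requiring care is the ordering, since $\widetilde r\ind{i}{k+1}$ depends on $h\ind{}{k}$ which is itself assembled via \eqref{h_update:eq} from the $h\ind{i}{k}$ produced by the Arnoldi-type step, an ordering that has already been made explicit in the derivation preceding the proposition.
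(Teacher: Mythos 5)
Your proof is correct and follows essentially the same walk-through as the paper's: form the last column of $V\ind{i}{k}R\ind{i}{k}$, apply $A$ to the stacked result, orthogonalize block-wise, and then update the small matrices via \eqref{h_update:eq} and the subsequent formulas. The only minor wrinkle is that your tally counts both the $k$ inner products and the $k$ axpys in the orthogonalization plus the $k$ axpys for forming $u_i$, which sums to $3k$ rather than the stated $2k$; the paper arrives at $2k$ by implicitly counting only scalings-and-additions (axpys) and not the inner products, but in either case the totals are within the margin of ``comparable'' intended by the proposition.
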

\begin{proof}
Computing the last column of $V\ind{i}{k}R\ind{i}{k}$ costs $k$ vector scalings and additions with vectors of length $n_i$ for $i=1,2$, which is comparable to $k$ scalings and additions with vectors of length $n$. Multiplication of these last columns with the $A_{ij}$ in \eqref{recursion:eq} amounts to one matrix vector multiplication with $A$. Orthogonalizing the two resulting blocks against all columns of $V\ind{k}{i}$ costs again $k$ scalings and additions of vectors of size $n_1$ and $n_2$ which corresponds to additional $k$ such operations on vectors of length $n$. All other necessary updates as described before require $\mathcal{O}(k^2)$ operations. 
\end{proof}


In the standard Arnoldi process, when $\eta\ind{}{k} = 0$, we know that we have reached the maximum size of the Krylov subspace, i.e.\ $k$ is equal to the grade of the initial residual $r\ind{}{0}$, and that $A^{-1}b$ is
contained in $\spK\ind{}{k}(A,r\ind{}{0})$. Since by \eqref{HK:eq} we have $\eta\ind{i}{k} = \rho\ind{i}{k} \eta\ind{}{k}$, $i=1,2$,
we see that the two-level orthogonal Arnoldi method also stops when $\eta\ind{}{k} = 0$. However, the reverse statement need not necessarily be true, i.e.\ we can have $\eta\ind{i}{k} = 0$ for $i=1,2$ without having $\eta\ind{}{k} = 0$. This would represent a serious breakdown of the two-level orthogonal Arnoldi process. Of course, exact zeros rarely appear in a numerical computation, but 
near breakdowns should be dealt with appropriately. In our implementation, we simply chose to replace a block vector corresponding to some $\eta\ind{i}{k} \approx 0$ by a vector with just random entries. This makes the book-keeping much easier, since then $d\ind{i}{k} = k$ for all $k$ and $i=1,2$, while keeping $V\ind{\times}{k}$ as a subspace of our approximation space. 



The full algorithm is summarized in Algorithm~\ref{alg:qkrylov}. We assume no deflation is necessary and no breakdown occurs for simplicity, but we can deal with this in practice in two ways. When $\widetilde v\ind{i}{k+1}$ is (numerically) linear dependent, we can either set $v\ind{i}{k+1}$ to some random vector and set $\eta\ind{i}{k}$ to zero, or we can set $V\ind{i}{k+1} = V\ind{i}{k}$ and $\underline H\ind{i}{k} = [H\ind{i}{k-1}\; h\ind{i}{k}]$. The former approach requires less bookkeeping, but the latter approach can safe space and time. Another simplification compared to a practical implementation is the use of classical Gramm--Schmidt for the orthogonalization, instead of repeated Gram--Schmidt or modified Gram--Schmidt. However, the algorithm does show how to avoid unnecessary recomputation of quantities. In particular, we avoid recomputing matrix-vector products by updating the products $W\ind{ij}{k} = A_{ij} V\ind{j}{k}$, $Z\ind{ij}{k,k} = (V\ind{i}{k})^* A_{ij} V\ind{j}{k}$, and $Z\ind{ij}{k+1,k} = (V\ind{i}{k+1})^* A_{ij} V\ind{j}{k}$.
Since this updating approach requires more memory, it should only be used if that extra memory is available, and if matrix-vector products with $A$ are sufficiently expensive.

\begin{algorithm2e}[!htbp]
    \SetAlgorithmStyle
    \caption{Quadratic Krylov}
    \label{alg:qkrylov}
    \KwIn{$A_{11}$, $A_{12}$, $A_{21}$, $A_{22}$, $b_1$, $b_2$, $k_{\max}$, and $\tau$}
    $\underline H\ind{}{0} = []$ and $\beta = (\|b_1\|^2 + \|b_2\|^2)^{-1/2}$\;
    \For{$i = 1, 2$}{
        $\rho\ind{i}{1} = \|b_i\| / \beta$ and $v\ind{i}{1} = b_i / \rho\ind{i}{1}$\;
        $\underline H\ind{i}{0} = []$, $R\ind{i}{1} = [\rho\ind{i}{1}]$, and $V\ind{i}{1} = [v\ind{i}{1}]$\;
        }
    \For{$k = 1$ to $k_{\max}$}{
        \For(\tcc*[f]{Update matrix products.}){$i = 1, 2$}{
            \For{$j = 1, 2$}{
                $w\ind{ij}{k} = A_{ij} v\ind{j}{k}$\;
                $W\ind{ij}{k} = [W\ind{ij}{k-1}\; w\ind{ij}{k}]$\;
                $Z\ind{ij}{k,k} = [Z\ind{ij}{k,k-1}\; (V\ind{i}{k})^* w\ind{ij}{k}]$\;
            }
        }
        \For(\tcc*[f]{Update $V\ind{i}{k+1}$ and $\underline H\ind{i}{k}$.}){$i = 1, 2$}{
            $\widetilde v\ind{i}{k+1} = W\ind{i1}{k} (R\ind{1}{k} e\ind{}{k}) + W\ind{i2}{k} (R\ind{2}{k} e\ind{}{k})$\;
            $h\ind{i}{k} = (V\ind{i}{k})^* \widetilde v\ind{i}{k+1}$\;
            $\eta\ind{i}{k} = \|\widetilde v\ind{i}{k+1} - V\ind{i}{k}h\ind{h}{k}\|$\;
            $v\ind{i}{k+1} = (\widetilde v\ind{i}{k+1} - V\ind{i}{k}h\ind{h}{k}) / \eta\ind{i}{k}$\;
            $V\ind{i}{k+1} = [V\ind{i}{k}\; v\ind{i}{k+1}]$ and $\underline H\ind{i}{k} = \Big[ \begin{smallmatrix} H\ind{i}{k-1} & h\ind{i}{k} \\ 0^T & \eta\ind{i}{k} \end{smallmatrix} \Big]$\;
            \For{$j = 1, 2$}{
                $Z\ind{ij}{k+1,k} = [Z\ind{ij}{k,k};\; (v\ind{i}{k+1})^* W\ind{ij}{k}]$\;
            }
        }
        \tcc{Update $\underline H\ind{}{k}$ and $R\ind{i}{k+1}$.}
        $h\ind{}{k} = (R\ind{1}{k})^* h\ind{1}{k} + (R\ind{2}{k})^* h\ind{2}{k}$\;
        \For{$i = 1, 2$}{
            $\widetilde r\ind{i}{k+1} = h\ind{i}{k} - R\ind{i}{k} h\ind{}{k}$\;
        }
        $\eta\ind{}{k} = (\|\widetilde r\ind{i}{k+1}\|^2 + |\eta\ind{1}{k}|^2 + \|\widetilde r\ind{i}{k+1}\|^2 + |\eta\ind{2}{k}|^2)^{1/2}$\;
        \For{$i = 1, 2$}{
            $r\ind{i}{k+1} = \widetilde r\ind{i}{k+1} / \eta\ind{}{k}$, $\rho\ind{i}{k+1} = \eta\ind{i}{k} / \eta\ind{}{k}$, and $R\ind{i}{k+1} = \Big[ \begin{smallmatrix} R\ind{i}{k} & r\ind{i}{k+1} \\ 0 & \rho\ind{i}{k+1} \end{smallmatrix} \Big]$\;
        }
        \tcc{Compute the approximation $x\ind{\texttt{qfom}}{k}$ and the residual $r\ind{\texttt{qfom}}{k}$.}
        $H\ind{\times}{k} = \Big[ \begin{smallmatrix} Z\ind{11}{k,k} & Z\ind{12}{k,k} \\ Z\ind{21}{k,k} & Z\ind{22}{k,k} \end{smallmatrix} \Big]$ and $b\ind{\times}{k} = \beta \Big[ \begin{smallmatrix} R\ind{1}{k} e\ind{}{k} \\ R\ind{2}{k} e\ind{}{k} \end{smallmatrix} \Big]$\;
        $\Big[ \begin{smallmatrix} c\ind{\texttt{qfom}}{k} \\ d\ind{\texttt{qfom}}{k} \end{smallmatrix} \Big] = (H\ind{\times}{k})^{-1} b\ind{\times}{k}$ and $x\ind{\texttt{qfom}}{k} = \Big[ \begin{smallmatrix} V\ind{1}{k} c\ind{\texttt{qfom}}{k} \\ V\ind{2}{k} d\ind{\texttt{qfom}}{k} \end{smallmatrix} \Big]$\;
        $r\ind{\texttt{qfom}}{k} = \Big[ \begin{smallmatrix} b_1 - W\ind{11}{k} c\ind{\texttt{qfom}}{k} - W\ind{12}{k} d\ind{\texttt{qfom}}{k} \\ b_2 - W\ind{21}{k} c\ind{\texttt{qfom}}{k} - W\ind{22}{k} d\ind{\texttt{qfom}}{k} \end{smallmatrix} \Big] $\;
        \If{$\|r\ind{\texttt{qfom}}{k}\| \le \tau \beta$}{
            \KwRet $x\ind{\texttt{qfom}}{k}$\;
        }
    }
    \KwRet $x\ind{\texttt{qfom}}{k_{\max}}$
\end{algorithm2e}

\exclude{
\renewcommand{\algorithmicrequire}{\textbf{Input:}}
\renewcommand{\algorithmicensure}{\textbf{Output:}}
\begin{algorithm}[!htbp]
    \caption{Quadratic Krylov}\label{alg:qkrylov}
    \begin{algorithmic}[1]
    \REQUIRE $A_{11}$, $A_{12}$, $A_{21}$, $A_{22}$, $b_1$, $b_2$, $k_{\max}$, and $\tau$
    \ENSURE $x_{\texttt{qfom}}$
    \STATE $\underline H\ind{}{0} = []$ and $\beta = (\|b_1\|^2 + \|b_2\|^2)^{-1/2}$
    \FOR{$i = 1, 2$}
        \STATE $\rho\ind{i}{1} = \|b_i\| / \beta$ and $v\ind{i}{1} = b_i / \rho\ind{i}{1}$
        \STATE $\underline H\ind{i}{0} = []$, $R\ind{i}{1} = [\rho\ind{i}{1}]$, and $V\ind{i}{1} = [v\ind{i}{1}]$
    \ENDFOR
    \FOR{$k = 1$ to $k_{\max}$}
        \STATE \COMMENT{Update matrix products.}
        \FOR{$i = 1, 2$}
            \FOR{$j = 1, 2$}
                \STATE $w\ind{ij}{k} = A_{ij} v\ind{j}{k}$
                \STATE $W\ind{ij}{k} = [W\ind{ij}{k-1}\; w\ind{ij}{k}]$
                \STATE $Z\ind{ij}{k,k} = [Z\ind{ij}{k,k-1}\; (V\ind{i}{k})^* w\ind{ij}{k}]$
            \ENDFOR
        \ENDFOR
        \STATE \COMMENT{Update $V\ind{i}{k+1}$ and $\underline H\ind{i}{k}$.}
        \FOR{$i = 1, 2$}
            \STATE $\widetilde v\ind{i}{k+1} = W\ind{i1}{k} (R\ind{1}{k} e\ind{}{k}) + W\ind{i2}{k} (R\ind{2}{k} e\ind{}{k})$
            \STATE $h\ind{i}{k} = (V\ind{i}{k})^* \widetilde v\ind{i}{k+1}$
            \STATE $\eta\ind{i}{k} = \|\widetilde v\ind{i}{k+1} - V\ind{i}{k}h\ind{h}{k}\|$
            \STATE $v\ind{i}{k+1} = (\widetilde v\ind{i}{k+1} - V\ind{i}{k}h\ind{h}{k}) / \eta\ind{i}{k}$
            \STATE $V\ind{i}{k+1} = [V\ind{i}{k}\; v\ind{i}{k+1}]$ and $\underline H\ind{i}{k} = \Big[ \begin{smallmatrix} H\ind{i}{k-1} & h\ind{i}{k} \\ 0^T & \eta\ind{i}{k} \end{smallmatrix} \Big]$
            \FOR{$j = 1, 2$}
                \STATE $Z\ind{ij}{k+1,k} = [Z\ind{ij}{k,k};\; (v\ind{i}{k+1})^* W\ind{ij}{k}]$
            \ENDFOR
        \ENDFOR
        \STATE \COMMENT{Update $\underline H\ind{}{k}$ and $R\ind{i}{k+1}$.}
        \STATE $h\ind{}{k} = (R\ind{1}{k})^* h\ind{1}{k} + (R\ind{2}{k})^* h\ind{2}{k}$
        \FOR{$i = 1, 2$}
            \STATE $\widetilde r\ind{i}{k+1} = h\ind{i}{k} - R\ind{i}{k} h\ind{}{k}$
        \ENDFOR
        \STATE $\eta\ind{}{k} = (\|\widetilde r\ind{i}{k+1}\|^2 + |\eta\ind{1}{k}|^2 + \|\widetilde r\ind{i}{k+1}\|^2 + |\eta\ind{2}{k}|^2)^{1/2}$
        \FOR{$i = 1, 2$}
            \STATE $r\ind{i}{k+1} = \widetilde r\ind{i}{k+1} / \eta\ind{}{k}$, $\rho\ind{i}{k+1} = \eta\ind{i}{k} / \eta\ind{}{k}$, and $R\ind{i}{k+1} = \Big[ \begin{smallmatrix} R\ind{i}{k} & r\ind{i}{k+1} \\ 0 & \rho\ind{i}{k+1} \end{smallmatrix} \Big]$
        \ENDFOR
        \STATE \COMMENT{Compute the approximation $x\ind{\texttt{qfom}}{k}$ and the residual $r\ind{\texttt{qfom}}{k}$.}
        \STATE $H\ind{\times}{k} = \Big[ \begin{smallmatrix} Z\ind{11}{k,k} & Z\ind{12}{k,k} \\ Z\ind{21}{k,k} & Z\ind{22}{k,k} \end{smallmatrix} \Big]$ and $b\ind{\times}{k} = \beta \Big[ \begin{smallmatrix} R\ind{1}{k} e\ind{}{k} \\ R\ind{2}{k} e\ind{}{k} \end{smallmatrix} \Big]$
        \STATE $\Big[ \begin{smallmatrix} c\ind{\texttt{qfom}}{k} \\ d\ind{\texttt{qfom}}{k} \end{smallmatrix} \Big] = (H\ind{\times}{k})^{-1} b\ind{\times}{k}$ and $x\ind{\texttt{qfom}}{k} = \Big[ \begin{smallmatrix} V\ind{1}{k} c\ind{\texttt{qfom}}{k} \\ V\ind{2}{k} d\ind{\texttt{qfom}}{k} \end{smallmatrix} \Big]$
        \STATE $r\ind{\texttt{qfom}}{k} = \Big[ \begin{smallmatrix} b_1 - W\ind{11}{k} c\ind{\texttt{qfom}}{k} - W\ind{12}{k} d\ind{\texttt{qfom}}{k} \\ b_2 - W\ind{21}{k} c\ind{\texttt{qfom}}{k} - W\ind{22}{k} d\ind{\texttt{qfom}}{k} \end{smallmatrix} \Big] $
        \IF{$\|r\ind{\texttt{qfom}}{k}\| \le \tau \beta$}
            \RETURN $x\ind{\texttt{qfom}}{k}$
        \ENDIF
    \ENDFOR
    \RETURN $x\ind{\texttt{qfom}}{k_{\max}}$
    \end{algorithmic}
\end{algorithm}
}


From the pseudocode of the algorithm we can determine the computational cost per iteration as follows.
We count one matrix-vector multiplication with each of the blocks $A_{11}$, $A_{12}$, $A_{21}$, and $A_{22}$, which equals one matrix-vector multiplication with $A$.
Then we have an orthogonalization cost of $\mathcal O((n_1 + n_2)k) = \mathcal O(nk)$, which equals the orthogonalization cost in the standard Arnoldi process.
Updating the $Z_{ij}$ costs $\mathcal O(nk)$ floating-point operations per iteration, but does not have an equivalent cost in Arnoldi.
The same is true for updating the matrices $\underline H\ind{}{k}$ and $R\ind{i}{k+1}$ for $i = 1,2$, although the cost is limited to $\mathcal O(k)$ flops in this case.
Computing $c\ind{\texttt{qfom}}{k}$ and $d\ind{\texttt{qfom}}{k}$ takes $\mathcal O(k^3)$ floating-point operations, while computing the approximation $x\ind{\texttt{qfom}}{k}$ and its residual $r\ind{\texttt{qfom}}{k}$ require $\mathcal O(nk)$.
Clearly, computing the approximation and its residual is expensive, but there is no need to do it in every iteration.
For example, in a restarted version of the QFOM algorithm, we may decide to compute them only once per restart, after the inner loop reaches $k_{\max}$.
When we add everything together, we see that QFOM has the same asymptotic cost as FOM, although QFOM does require more memory.

With minor changes, we can change the code of Algorithm~\ref{alg:qkrylov} to compute the QQGMRES approximation instead of the QFOM approximation.
One downside of QQGMRES is that we cannot guarantee that its approximation, or even the residual norm of its approximation, is better than that of GMRES.
We can remedy this problem by interpolating between the GMRES and the QQGMRES solution.
Let $r\ind{\texttt{gmres}}{k} = b - A x\ind{\texttt{gmres}}{k}$ and $r\ind{\texttt{qqgmr}}{k} = b - A x\ind{\texttt{qqgmr}}{k}$, then
\begin{multline*}
    \|b - A (\alpha x\ind{\texttt{gmres}}{k} + (1-\alpha) x\ind{\texttt{qqgmr}}{k})\|^2
    = \|\alpha r\ind{\texttt{gmres}}{k} + (1-\alpha) r\ind{\texttt{qqgmr}}{k}\|^2 \\
    = \alpha^2 \|r\ind{\texttt{gmres}}{k} - r\ind{\texttt{qqgmr}}{k}\|^2 + 2\alpha (\Re\{(r\ind{\texttt{gmres}}{k})^* r\ind{\texttt{qqgmr}}{k}\} - \|r\ind{\texttt{qqgmr}}{k}\|^2) + \|r\ind{\texttt{qqgmr}}{k}\|^2.
\end{multline*}
Hence, the residual norm of the interpolated approximation is minimized for
\begin{equation*}
  \alpha_{\texttt{opt}} = \frac{\|r\ind{\texttt{qqgmr}}{k}\|^2 - \Re\{(r\ind{\texttt{gmres}}{k})^* r\ind{\texttt{qqgmr}}{k}\}}{\|r\ind{\texttt{gmres}}{k} - r\ind{\texttt{qqgmr}}{k}\|^2}
\end{equation*}
if $r\ind{\texttt{gmres}}{k} \neq r\ind{\texttt{qqgmr}}{k}$.
The residual norm of the approximation $x\ind{\texttt{opt}}{k}$ corresponding $\alpha_\texttt{opt}$ is
\begin{equation*}
  \|r_{\texttt{opt}}\|^2
  = \frac{\|r\ind{\texttt{gmres}}{k}\|^2 \|r\ind{\texttt{qqgmr}}{k}\|^2 - \Re\{(r\ind{\texttt{gmres}}{k})^* r\ind{\texttt{qqgmr}}{k}\}^2}{\|r\ind{\texttt{gmres}}{k} - r\ind{\texttt{qqgmr}}{k}\|^2},
\end{equation*}
and satisfies $\|r_{\texttt{opt}}\| \le \min \{ \|r_{\texttt{gmres}}\|, \|r_{\texttt{qqgmr}}\| \}$.

\exclude{
Let $\underline{\mathcal A}_{k}^\times$ be defined as
\begin{equation*}
    \begin{bmatrix}
        \underline A_{k} & \underline B_{k} \\
        \underline C_{k} & \underline D_{k}
    \end{bmatrix}
    =
    \begin{bmatrix}
        V\ind{1}{k+1} \\ & V\ind{2}{k+1}
    \end{bmatrix}^*
    \begin{bmatrix}
        A & B \\ C & D
    \end{bmatrix}
    \begin{bmatrix}
        V\ind{1}{k} \\ & V\ind{2}{k}
    \end{bmatrix}
    =
    \begin{bmatrix}
        ( V\ind{1}{k+1})^* A V\ind{1}{k} & (V\ind{1}{k+1})^* B V\ind{2}{k} \\
        ( V\ind{2}{k+1})^* C V\ind{1}{k} & (V\ind{2}{k+1})^* D V\ind{2}{k}
    \end{bmatrix},
\end{equation*}
then $\underline{\mathcal A}_{k}^\times$ can be cheaply updated if we store $AV\ind{1}{k}$, $BV\ind{2}{k}$, $CV\ind{1}{k}$, and $DV\ind{2}{k}$.
To reduce the computational overhead of FOM${}^2$ and GMRES${}^2$ when compared to FOM and GMRES, observe that the projections above can be used in the orthogonalization steps if (repeated) Gram--Schmidt is used.

Specifically, if $\ind{\vec{\hat v}_{k+1}}{1} = (A\ind{V_k}{1} \ind{R_k}{1} + B\ind{V_k}{2} \ind{R_k}{2}) \vec e_k$ and $\ind{\vec{\hat v}_{k+1}}{2} = (C\ind{V_k}{1} \ind{R_k}{1} + D\ind{V_k}{2} \ind{R_k}{2}) \vec e_k$, then
\begin{equation*}
    \begin{split}
        \big( I - \ind{V_k}{1} (\ind{V_k}{1})^* \big) \ind{\vec{\hat v}_{k+1}}{1}
        &= \ind{\vec{\hat v}_{k+1}}{1} - \ind{V_k}{1} \big( A_k \ind{R_k}{1} + B_k \ind{R_k}{2} \big) \vec e_k, \\
        \big( I - \ind{V_k}{2} (\ind{V_k}{2})^* \big) \ind{\vec{\hat v}_{k+1}}{2}
        &= \ind{\vec{\hat v}_{k+1}}{2} - \ind{V_k}{2} \big( C_k \ind{R_k}{1} + D_k \ind{R_k}{2} \big) \vec e_k.
    \end{split}
\end{equation*}

If we want update a QR decomposition of $\underline{\mathcal A}_{k}^\times$, then it might be preferable to project with
\begin{equation*}
    \begin{bmatrix}
        \ind{\vec v_1}{1} & \vec 0 & \ind{\vec v_2}{1} & \vec 0 & \dots \\
        \vec 0 & \ind{\vec v_1}{2} & \vec 0 & \ind{\vec v_2}{2} & \dots
    \end{bmatrix}.
\end{equation*}
This simple reordering ensures that we only add rows/columns to the bottom/end of $\underline{\mathcal A}_{k}^\times$, which simplifies the QR update.

\begin{lemma} Let $\mathcal{R}(\cdot)$ denote the column range of a matrix. Then we have
\begin{itemize}
\item[(i)] $\mathcal R(AV\ind{}{k}) \subseteq \mathcal R(AV\ind{\times}{k})$ and 
\item[(ii)]$\mathcal R((V\ind{\times}{k+1})^* A V\ind{}{k} ) \subseteq \mathcal R((V\ind{\times}{k+1})^*  A 
 V\ind{\times}{k})$.
 \end{itemize}
\end{lemma}
\begin{proof}
    Both results follow from the fact that since $V\ind{}{k} = V\ind{\times}{k} \left[ \begin{smallmatrix}
    I_k \\ I_k \end{smallmatrix} \right]$ we have $\mathcal{R} (V\ind{}{k}) \subseteq \mathcal{R}(V\ind{\times}{k})$.
\end{proof}

\begin{theorem}
    The residual norm of Faux-GMRES${}^2$ is less than or equal to the residual of GMRES.
\end{theorem}
\begin{proof}
    The residual norm of any GMRES solution $\vec c_k$ can be written as
    \begin{equation*}
        \| V_{k+1} (\beta \vec e_1 - \underline H_k \vec c_k) \|
        = \left\| \begin{bmatrix}
            V\ind{1}{k+1} R\ind{1}{k+1} \\
            V\ind{2}{k+1} R\ind{2}{k+1}
        \end{bmatrix}
        (\beta \vec e_1 - \underline H_k \vec c_k) \right\|
        = \left\| \begin{bmatrix}
            R\ind{1}{k+1} \\ R\ind{2}{k+1}
        \end{bmatrix}
        (\beta \vec e_1 - \underline H_k \vec c_k) \right\|,
    \end{equation*}
    whereas Faux-GMRES${}^2$ minimizes
    \begin{equation*}
        \left\|
            \beta
            \begin{bmatrix} R\ind{1}{k+1} \\ R\ind{2}{k+1} \end{bmatrix}
            \vec e_1
            {} -  \begin{bmatrix}
                \underline A_{k} & \underline B_{k} \\
                \underline C_{k} & \underline D_{k}
            \end{bmatrix}
            \begin{bmatrix} c\ind{1}{k} \\ c\ind{2}{k} \end{bmatrix}
        \right\|
    \end{equation*}
    for some $c\ind{1}{k}$ and $c\ind{2}{k}$. Finally, the previous lemma implies the result of the proposition.
\end{proof}
TODO: the norm above is not necessarily equal to the norm of the Faux-GMRES${}^2$ residual\dots But in any case, the GMRES result should be a special case of the Faux-GMRES${}^2$ result?
}

\section{Numerical experiments} \label{numerics:sec}


\subsection{The Hain-L\"ust operator}
Hain-L\"ust operators appear in magnetohydrodynamics \cite{HainLuest58},
and their spectral properties, in particular their quadratic numerical range, were investigated in a series of papers, e.g.,
in \cite{LangerTretter1998,MuhammadMarletta12,MuhammadMarletta13}.
We consider the Hain-L\"ust operator 
\[
\spA = \begin{bmatrix} -\spL & I \\ I & q \end{bmatrix}
\]
acting on $L^2([0,1])\times L^2([0,1])$
where $\spL=d^2/dx^2$ is the Laplace operator on $[0,1]$ with Dirichlet boundary conditions, $I$ is the identity operator,
and $q$ denotes multiplication by the function $q(x)=-3+2e^{2\pi ix}$.
The domain of $\spA$ is
$D(\spA)=(H^2([0,1])\cap H^1_0([0,1])) \times L^2([0,1])$.

We consider a discretization of $\spA$, approximating function values at an equispaced grid for both blocks, i.e.\ we take 
$x_j=jh$, $j=0,\dots,N+1$, $h=1/(N+1)$ and obtain, using finite differences, the discretized Hain-L\"ust operator 
\[
A = \begin{bmatrix} \frac{1}{h^2}L & I \\ I & Q \end{bmatrix} \in \mathC^{2N\times 2N},
\]
with $L = \mbox{tridiag}(-1,2,-1) \in \mathC^{N \times N}$ and 
$ Q = -3I + 2\diag(e^{2h\pi i}, \dots, e^{2hN\pi i}) \in \mathC^{N \times N}$,
see \cite{MuhammadMarletta13} for more details.

Note that $\tfrac{1}{h^2}L$ is Hermitian and that $Q$ is normal, so the numerical ranges of these diagonal blocks of $A$ satisfy
\begin{eqnarray*}
W_1 &:=&  W(\tfrac{1}{h^2} L ) \, =\,  \tfrac{1}{h^2}[2-2\cos(\pi h), 2 + 2\cos(\pi h)] \, =: \, [\alpha_{\min}(h), \alpha_{\max}(h)], \\
W_2 &=& W(Q) \, =\, \conv \{-3+2e^{2\pi h j}, j=1,\ldots,N\} \subseteq C(-3,2),
\end{eqnarray*}
where $C(-3,2)$ is the circle with center $-3$ and radius $2$. Since both numerical ranges $W_1$ and $W_2$ are contained in the convex set $W(A)$ we see that $0 \in W(A)$. The following argumentation shows that, with the possible exception of very large values for $h$, we have $0 \not \in W^2(A)$: Any $\lambda \in W^2(A)$ 
satisfies
\begin{equation} \label{2x2:eq}
(\lambda-x_1^*\tfrac{1}{h^2}Lx_1)(\lambda-x_2^*Qx_2) = (x_1^*x_2)(x_2^*x_1),
\end{equation}
for some $x_1,x_2$ with $\|x_1\| = \|x_2\| = 1$.
Assume that $\lambda$ lies within the strip $a < \Re(\lambda) < b$ with $-1 < a <0$ and $0<b< \alpha_{\min}(h)$. Then we have $\cw{d(\lambda,W_1) > \alpha_{\min}(h)-b}$ as well as  $\cw{d(\lambda,W_2) > a+1}$ for the distances of $\lambda$ to the sets $W_1,W_2$. Taking absolute values in \eqref{2x2:eq} and using the bound $|x^*_1x_2| \leq 1$ we thus see that $\lambda$ from this strip cannot be in $W^2(A)$ if $(a+1)(\alpha_{\min}(h)-b) > 1$. This is the case, for example, if 
$b < \alpha_{\min}(h) -2$ and $a > -\tfrac{1}{2}$. Note that $\lim_{h \to 0}\alpha_{\min}(h) = \pi^2$.

\begin{figure}
  \centering
  \begin{tikzpicture}
    \begin{semilogyaxis}[
          height=0.475\textwidth
        , width=0.475\textwidth
        , xlabel={Restart}
        , ylabel={Relative residual norm}
        , every axis legend/.append style = {
              anchor = north east
            , at = {(0.98, 0.98)}
          }
        , enlarge x limits = false
        ]

        \addlegendentry{FOM}
        \addplot+ table [x=RST, y=FOM] {\hltable};
        
        \addlegendentry{GMRES}
        \addplot+ table [x=RST, y=GMRES] {\hltable};
        
        \addlegendentry{QFOM}
        \addplot+ table [x=RST, y=QFOM] {\hltable};

        \addlegendentry{QQGMRES}
        \addplot+ table [x=RST, y=QQGMRES] {\hltable};
        
        \addlegendentry{Interp}
        \addplot+ table [x=RST, y=InterpQQGMRES] {\hltable};
    \end{semilogyaxis}
  \end{tikzpicture}
  \begin{tikzpicture}
    \begin{semilogyaxis}[
          height=0.475\textwidth
        , width=0.475\textwidth
        , xlabel={Restart}
        , ylabel={Relative residual norm}
        , every axis legend/.append style = {
              anchor = north east
            , at = {(0.98, 0.98)}
          }
        , enlarge x limits = false
        , yticklabel pos = right
        ]

        \addlegendentry{FOM}
        \addplot+ table [x=RST, y=FOM] {\hltablelarge};
        
        \addlegendentry{GMRES}
        \addplot+ table [x=RST, y=GMRES] {\hltablelarge};
        
        \addlegendentry{QFOM}
        \addplot+ table [x=RST, y=QFOM] {\hltablelarge};

        \addlegendentry{QQGMRES}
        \addplot+ table [x=RST, y=QQGMRES] {\hltablelarge};
        
        \addlegendentry{Interp}
        \addplot+ table [x=RST, y=InterpQQGMRES] {\hltablelarge};
    \end{semilogyaxis}
  \end{tikzpicture}
  \caption{Convergence plots for the discretized Hain-L\"ust operator: $N=1\,023$ (left) and  $N = 16\,383$ (right), \label{fig:hl1}}
\end{figure}
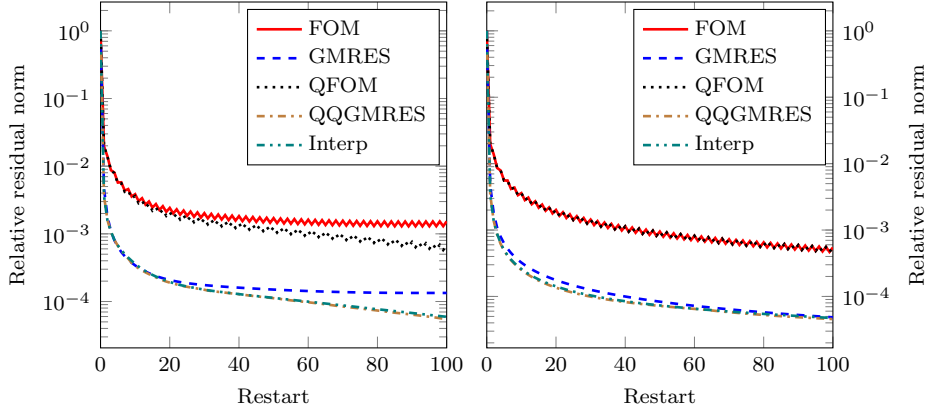

In all our examples we chose the right hand side $b$ as $b = Ae$ where $e$ is 
the vector of all ones, and our initial guess is always $x^0 = 0$. 
Figure~\ref{fig:hl1} shows convergence plots for FOM, GMRES, QFOM, QQGMRES and
the interpolated QQGMRES method as described at the end of Section~\ref{algorithm:sec}.  
The figure displays the relative norm of the residual 
as a function of the invested matrix-vector multiplications. In the left part,
we took $N=1\,023$, the right part is for $N = 16\,383$.  We restarted every method after $m=50$ iterations to avoid that the arithmetic work and the storage 
related with the (two-level) Arnoldi process becomes too expensive.  Note that the figure displays the residual norms at the end of each cycle only, which makes the convergence of some of the methods, in particular FOM, to appear smoother than it actually is. Two major 
observations can be made: On the one side, the FOM type methods yield significantly larger residals than the GMRES type methods. For $N=1\,023$, the ``quadratic methods'' still make progress in the later cycles while their ``non-quadratic'' counter parts then basically stagnate. There is no such difference visible for dimension $N=16\,383$; convergence for all methods is very slow. 

In a second numerical experiment we therefore report results of a geometric
multigrid method as an attempt to cope with large condition
numbers. For a given discretization with step size $h=1/(N+1)$ with $N+1 =
2^k$ we construct the system at the next coarser level to be the discretizaton
with $h_c = 2h = 1/(N_c+1)$ with $N_c+1 = 2^{k-1}$. We stop descending the
grid hierarchy when we reach $N=7$, where we solve the corresponding $14 \times 14$ system by 
explicit inversion of $A$. Interpolation between two levels of the grid
hierarchy is done using standard linear interpolation from the neighboring 
grid points; restriction is the standard adjoint of interpolation. For the
smoothing iteration we test one or five steps of standard GMRES versus one or
two steps of QFOM. We always performed V-cycles with pre-smoothing. The left
part of Figure~\ref{hl2:fig} gives the resulting convergence plots for the
multigrid methods for $N=1\,023$, the right part for $N=16\,383$.

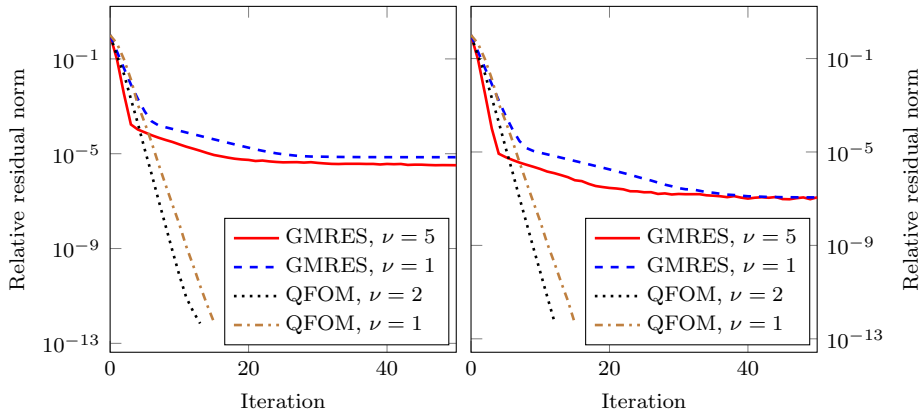
\begin{figure}[htbp!]
  \centering
  \begin{tikzpicture}
    \begin{semilogyaxis}[
          height=0.475\textwidth
        , width=0.475\textwidth
        , xlabel={Iteration}
        , ylabel={Relative residual norm}
        , every axis legend/.append style = {
              anchor = south east
            , at = {(0.98, 0.02)}
          }
        , enlarge x limits = false
        ]

        \addlegendentry{GMRES, $\nu=5$}
        \addplot+ table [x=it, y=gmresfive] {\hlmgsmall};
        
        \addlegendentry{GMRES, $\nu=1$}
        \addplot+ table [x=it, y=gmresone] {\hlmgsmall};
        
        \addlegendentry{QFOM, $\nu = 2$}
        \addplot+ table [x=it, y=qfomtwo] {\hlmgsmall};

        \addlegendentry{QFOM, $\nu = 1$}
        \addplot+ table [x=it, y=qfomone] {\hlmgsmall};
    \end{semilogyaxis}
  \end{tikzpicture}
  \begin{tikzpicture}
    \begin{semilogyaxis}[
          height=0.475\textwidth
        , width=0.475\textwidth
        , xlabel={Iteration}
        , ylabel={Relative residual norm}
        , every axis legend/.append style = {
              anchor = south east
            , at = {(0.98, 0.02)}
          }
        , enlarge x limits = false
        , yticklabel pos = right
        ]

        \addlegendentry{GMRES, $\nu = 5$}
        \addplot+ table [x=it, y=gmresfive] {\hlmglarge};
        
        \addlegendentry{GMRES, $\nu = 1$}
        \addplot+ table [x=it, y=gmresone] {\hlmglarge};
        
        \addlegendentry{QFOM, $\nu = 2$}
        \addplot+ table [x=it, y=qfomtwo] {\hlmglarge};

        \addlegendentry{QFOM, $\nu = 1$}
        \addplot+ table [x=it, y=qfomone] {\hlmglarge};
        
    \end{semilogyaxis}
  \end{tikzpicture}
  \caption{Convergence plots for geometric multigrid for the Hain-L\"ust operator for QFOM and GMRES smoothing and different numbers of smoothing steps $\nu$; $N = 1\,023$ (left), $N = 16\,383$ (right).}
    \label{hl2:fig}
\end{figure}

From these 
plots it is apparent that QFOM is a well-working smoothing iteration for the
multigrid method, whereas GMRES is not, even not for larger numbers of smoothing
steps per iteration. As a complement to these results, Figure~\ref{hl3:fig} illustrates the mesh size independence of the convergence behavior of the multigrid method with QFOM smoothing. It shows that the number of iterations required to reduce the initial residual by a factor of $10^{-12}$ is basically independent of $h$. 
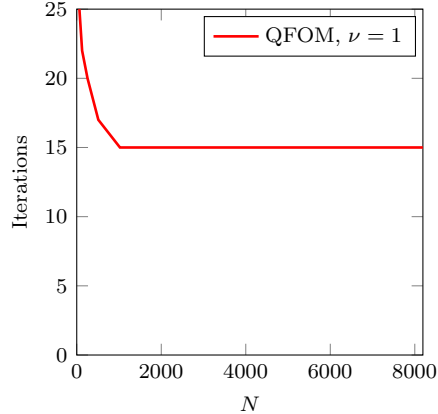
\begin{figure}
    \centering
    \begin{tikzpicture}
        \begin{axis}[
              height=0.475\textwidth
            , width=0.475\textwidth
            , xlabel={$N$}
            , ylabel={Iterations}
            , every axis legend/.append style = {
              anchor = north east
            , at = {(0.98, 0.98)}
          }
        , enlarge x limits = false
         , xmin = 0
         , xmax = 8192
         , ymax = 25
         , ymin = 0
         , ytick = {0,5,10,15,20,25}
        ]

        \addlegendentry{QFOM, $\nu = 1$}
        \addplot+ table [x=N, y=it] {\hlmgscaling};
        
    \end{axis}
  \end{tikzpicture}
    \caption{Number of multigrid iterations needed to reduce the initial residual by a factor of $10^{-12}$ as a function of $N$}
    \label{hl3:fig}
\end{figure}

\subsection{The Schwinger model}
Our second example is the Schwinger model in two-dimensions that arises in computations of quantum electrodynamics (QED). QED models the interactions of electrons and photons and is oftentimes used as a simpler model problem for the $4$-dimensional problems of quantum chromodynamics (QCD).
It is a quantum field theory, meaning that physical quantities arise as expected values of solutions of partial differential equations whose coefficients are coming from the quantum background field, i.e., they are stochastic quantities obeying a given distribution. The Schwinger model is a discretization of the Dirac equation 
\[
\mathcal{D}\psi = \left(\sigma_1 \otimes \left(\partial_x + A_x\right) + \sigma_2 \otimes \left(\partial_y + A_y \right)\right)\psi = \varphi,
\] on a regular, 2-dimensional $N \times N$ cartesian lattice, where the spin structure\footnote{The $\sigma$-matrices are generators of a Clifford algebra and arise in the derivation of the Dirac equation from the Klein-Gordon equation. They give rise to the internal spin (i.e., angular momentum) degrees of freedom of the fields $\psi$ \cite{GattLang2010}. Note that although our discussion is limited to this particular choice of generators, all the results that follow extend to any other of the admissible choices of the $\sigma$-matrices.} is encoded by the Pauli matrices
\[
\sigma_1 = \begin{pmatrix} & 1 \\ 1 & \end{pmatrix},\ \ \sigma_2 = \begin{pmatrix}  & i \\ -i & \end{pmatrix} \text{\ \ and\ \ } \sigma_3 = \begin{pmatrix} 1 & \\ & -1 \end{pmatrix}
\] and $A_{\mu}$ encodes the background gauge field. In the Schwinger model we have $A_{\mu} \in \mathbb{R}$. Using a central covariant finite difference discretization for the first order derivatives, and introducing a scaled second-order stabilization term one writes the action of the discretized operator $D \in \mathbb{C}^{2N^2 \times 2N^2}$ of the Schwinger model at any lattice site $x$ on a spinor $\psi(x) \in \mathbb{C}^{2}$ as
\begin{equation} \label{schwinger1:eq}
\left. 
\begin{array}{rcl} \displaystyle
    \left(D\psi\right)(x) & = & \left(m_0 + 2\right)\psi(x) \\ 
    \displaystyle &  & \displaystyle \mbox{} + \frac{1}{2}\sum_{\mu \in \{x,y\}} \left(\left(I-\sigma_{\mu}\right)\otimes U_{\mu}(x)\right)\psi(x+e_{\mu}) \\ 
   \displaystyle  &  & \displaystyle\mbox{} + \frac{1}{2}\sum_{\mu \in \{x,y\}} \left(\left(I+\sigma_{\mu}\right)\otimes \overline{U_{\mu}(x-e_{\mu})}\right)\psi(x-e_{\mu}).
\end{array}
\right\}
\end{equation}
In here {$U_\mu$ correspond to a discrete version of the stochastically varying gauge field with $ U_\mu(x) \in \mathbb{C}, |U_\mu(x)| = 1$ for all $x$}, and $m_0$ sets the mass of the simulated theory. The naming convention of this formula is depicted in~\cref{fig:namingconvention}, and {we refer to the textbook \cite{GattLang2010}, e.g., for further details.}
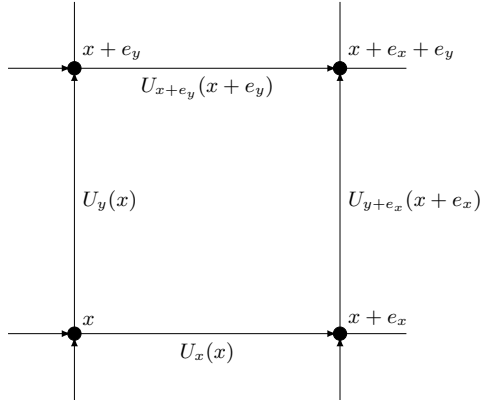
\begin{figure}
    \centering
    \resizebox{.5\linewidth}{!}{
    \begin{tikzpicture}
        \draw[fill=black] (0,0) circle (.1cm) node[above right] (P00) {\small$x$};
        \draw[fill=black] (4,0) circle (.1cm) node[above right] (P10) {\small$x+e_x$};
        \draw[fill=black] (0,4) circle (.1cm) node[above right] (P01) {\small$x+e_y$};
        \draw[fill=black] (4,4) circle (.1cm) node[above right] (P11) {\small$x+e_x+e_y$};
        \draw[-latex] (.05,0) -- node[below] {\small $U_x(x)$} (3.95,0);
        \draw[-latex] (.05,4) -- node[below] {\small $U_{x+e_y}(x+e_y)$} (3.95,4);
        \draw[-latex] (0,.05) -- node[right] {\small $U_y(x)$} (0,3.95);
        \draw[-latex] (4,.05) -- node[right] {\small $U_{y+e_x}(x+e_x)$} (4,3.95);
        \draw[-latex] (-1,0) -- (-.05,0);
        \draw[-latex] (-1,4) -- (-.05,4);
        \draw[-] (4.05,0) -- (5,0);
        \draw[-] (4.05,4) -- (5,4);
        \draw[-latex] (0,-1) -- (0,-.05);
        \draw[-latex] (4,-1) -- (4,-.05);
        \draw[-] (0,4.05) -- (0,5);
        \draw[-] (4,4.05) -- (4,5);
    \end{tikzpicture}
    }
    \caption{Naming conventions in the Schwinger model.}
    \label{fig:namingconvention}
\end{figure}
{The canonical $2\times 2$ block structure of the Schwinger model matrix arises from the spin structure: We reorder the unknowns in $\psi$ according to spin, i.e., we take  
\[
\psi = \begin{pmatrix} \psi_1 \\ \psi_2 \end{pmatrix},
\] 
where $\psi_1 \in \mathbb{C}^{N^2}$ collects all the spin 1 components $\psi_1(x)$ of $\psi(x) = \left[ \begin{smallmatrix}  \psi_1(x) \\ \psi_2(x) \end{smallmatrix} \right] \in \mathbb{C}^2$ at all lattice sites, and similarly for $\psi_2$. Then the reordered discretized Schwinger model matrix, acting on the reordered vector $\left( \begin{smallmatrix}  \psi_1(x) \\ \psi_2(x) \end{smallmatrix} \right) $, is given as 
\[
 D = \begin{pmatrix} A & B \\ -B^{*} & A \end{pmatrix}.
\] 
}

\exclude{
Here, the diagonal blocks $A$ correspond to the discretized second order stabilization term and are thus called gauge Laplace operators, while the off-diagonal blocks $B$ correspond to the central finite covariant difference discretization of the Dirac equation. Using \eqref{schwinger1:eq} we see that the action of {the blocks $A$ and $B$ on a vector $\psi_1, \psi_2$ is given as}
\begin{align*}
    (A \psi_{1})(x) = (m_0 + 2) \psi_{1}(x) - & \frac{1}{2} \sum_{\mu \in \{x,y\} } U_{\mu}(x) \psi_1(x+e_{\mu}) \\
    - &\frac{1}{2} \sum_{\mu \in \{x,y\} } \overline{U_{\mu}(x-e_{\mu})}\psi_1(x-e_{\mu}),\\
    (B \psi_{2})(x) = - & \frac{1}{2} \left( U_{x}(x) \psi_1(x+e_{x}) + i \cdot U_{y}(x) \psi_1(x+e_{y})\right)\\
    + & \frac{1}{2} \left(\overline{U_{x}(x-e_{x})}\psi_1(x-e_{x}) - i \cdot \overline{U_{y}(x-e_{y})} \psi_1(x-e_{y})\right).
\end{align*} In our tests we are interested in the transformed problem $Q := \Sigma_3 D$ with $\Sigma_3 = \sigma_3 \otimes I_{N \cdot N}$. Due to $A^{*} = A, B^{*} = -B$ this operator 
\[
Q = \begin{pmatrix} A & B \\ B^* & -A \end{pmatrix}
\]
is hermitian, but indefinite.

For the tests we report in~\cref{fig:schwinger} we use a gauge configuration with the following parameters. At $m_0 = 0$ the smallest real part of any eigenvalue $\lambda_{\rm min} \approx .22$ and $\alpha_{\rm min} \approx .11$. We choose shifts $m_0$ that move the spectrum of $D$ closer to the origin.  
In~\cref{fig:schwinger} we report results for $m_0 = -.1$ (left) and $m_0 = -.219$ (right). While in the left plot a marked improvement can be observed of the methods based on Q-Krylov subspaces, this advantage is lost in case the smallest real part of any eigenvalue is shifted closer to the origin as seen in the right plot. To some extent we suspect that the first shift corresponds to a case where $0 \notin W_2(Q)$, whereas the shift in the right plot corresponds to $0\in W_2(Q)$, which renders the theory developed in this paper useless. Notably, we had to choose a much larger shift than $\alpha_{\rm min}$ in order to obtain the plot on the right. A shift of $m_0 = -.12$, which fulfills $0\in W_2(Q)$, surprisingly results in a convergence plots similar to the situation found on the left. 
Unfortunately, there is no analytical or algorithmical argument that can make these statements about the origin being contained in the quadratic field of values or not rigorous. Instead we might use the discrepancy between the convergence of GMRES and QGMRES/QFOM to approach this question.
}

Here, the diagonal blocks $A$ correspond to the discretized second order stabilization term and are thus called gauge Laplace operators, while the off-diagonal blocks $B$ correspond to the central finite covariant difference discretization of the Dirac equation. Using \eqref{schwinger1:eq} we see that the action of {the blocks $A$ and $B$ on a vector $\psi_1, \psi_2$ is given as}
\begin{align*}
  (A \psi_{1})(x) &= (m_0 + 2) \psi_{1}(x)
  \begin{aligned}[t]
    &- \frac{1}{2} \sum_{\mu \in \{x,y\} } U_{\mu}(x) \psi_1(x+e_{\mu}) \\
    &- \frac{1}{2} \sum_{\mu \in \{x,y\} } \overline{U_{\mu}(x-e_{\mu})}\psi_1(x-e_{\mu}),
  \end{aligned}
  \\
  (B \psi_{2})(x) &=
  \begin{aligned}[t]
    &- \frac{1}{2} \left( U_{x}(x) \psi_1(x+e_{x}) + i \cdot U_{y}(x) \psi_1(x+e_{y})\right)\\
    &+ \frac{1}{2} \left(\overline{U_{x}(x-e_{x})}\psi_1(x-e_{x}) - i \cdot \overline{U_{y}(x-e_{y})} \psi_1(x-e_{y})\right).
  \end{aligned}
\end{align*} 
From this we see that the mass parameter $m_0$ induces a shift by a multiple of the identity in $A$, which we make explicit in writing $A = A_0 + m_0I$.

In our tests we consider the ``symmetrized'' operator  $Q := \Sigma_3 D$ with $\Sigma_3 = \sigma_3 \otimes I_{N \cdot N}$. Due to $A^{*} = A, B^{*} = -B$ this operator 
\[
Q = \begin{pmatrix} A & B \\ B^* & -A \end{pmatrix} = \begin{pmatrix} A_0 + m_0I & B \\ B^* & -A_0-m_0I \end{pmatrix} 
\]
is hermitian, but indefinite.

The quadratic range $W_2(Q)$ has two connected components to the left and right of $0$ on the real axis, provided $m_0 > -\alpha_{\min}$, the smallest eigenvalue of $A_0$. This can be seen as follows: Let $x_1, x_2 \in \mathC^{N \times N }$ be two normalized vectors and let
\[
\begin{pmatrix} x_1^*Ax_1 & x_1^*Bx_2 \\ x_2^*B^*x_1 & -x_2^*Ax_2 \end{pmatrix} =: \begin{pmatrix} \alpha_1 & \beta \\ \overline{\beta} & -\alpha_2 \end{pmatrix}.
\]
Then any eigenvalue $\lambda$ of this matrix satisfies 
\begin{eqnarray*}
&& (\lambda-\alpha_1)(\lambda+\alpha_2) \, = \,  |\beta|^2 \\ 
&\Longrightarrow & (\Re(\lambda)-\alpha_1)(\Re(\lambda)+\alpha_2) = |\beta|^2 + \Im(\lambda)^2.
\end{eqnarray*}
The last equality cannot be satisfied if $-\alpha_2 < \Re(\lambda) < \alpha_1$. In particular, if $m_0 > -\alpha_{\min}$, the equality cannot be satisfied if $|\Re(\lambda)| < m_0 +\alpha_{\min}$, since $\alpha_1, \alpha_2 \geq  m_0 +\alpha_{\min}$.

For our tests we use a gauge configuration obtained by a heatbath algorithm excluding the fermionic action, which results in the smallest eigenvalue $\alpha_{\min}$ of $A_0$ being approximately $0.11$.  \cref{fig:schwinger}  reports results for two different choices of $m_0$. As in the first example we perform a restart after every $50$ iterations. The first choice for $m_0$ is $m_0 = -0.1 > -\alpha_{\min}$, so that the quadratic range indeed has two connected components with a gap around $0$. The second is $m_0 = -0.22 < -\alpha_{\min}$, so that $W^2(Q)$ consists of only one component containing $0$.    
The figure shows that  a marked improvement can be observed for the ``quadratic'' methods if the quadratic range consists indeed of two different connected components (left plot), whereas this advantage is lost to a large extent for the second choice for $m_0$,  where $W^2(Q)$ does not indicate a spectral gap (right plot). In this case, the system is also severely ill-conditioned, so that the convergence of all methods considered is much slower.  We also note that for this example and for both choices for $m_0$, interpolated QQGMRES does not differ substantially from standard GMRES. Without showing the corresponding convergence plots, let us at least mention that when decreasing $m_0$ from $-0.1$ to $-0.22$ we observe for a long time a convergence behavior very similar to that for the largest value $-0.1$, even when $m_0$ is already smaller than $-\alpha_{\min}$.

\begin{figure}[htbp!]
  \centering
  \begin{tikzpicture}
    \begin{semilogyaxis}[
          height=0.475\textwidth
        , width=0.475\textwidth
        , xlabel={Restart}
        , ylabel={Relative residual norm}
        , every axis legend/.append style = {
              anchor = north east
            , at = {(0.98, 0.98)}
          }
        , enlarge x limits = false
        ]

        \addlegendentry{FOM}
        \addplot+ table [x=RST, y=FOM] {\schwingertable};
        
        \addlegendentry{GMRES}
        \addplot+ table [x=RST, y=GMRES] {\schwingertable};
        
        \addlegendentry{QFOM}
        \addplot+ table [x=RST, y=QFOM] {\schwingertable};

        \addlegendentry{QQGMRES}
        \addplot+ table [x=RST, y=QQGMRES] {\schwingertable};
        
        \addlegendentry{Interp}
        \addplot+ table [x=RST, y=InterpQQGMRES] {\schwingertable};
    \end{semilogyaxis}
  \end{tikzpicture}
  \begin{tikzpicture}
    \begin{semilogyaxis}[
          height=0.475\textwidth
        , width=0.475\textwidth
        , xlabel={Restart}
        , ylabel={Relative residual norm}
        , every axis legend/.append style = {
              anchor = north east
            , at = {(0.98, 0.98)}
          }
        , enlarge x limits = false
        , yticklabel pos = right
        ]

        \addlegendentry{FOM}
        \addplot+ table [x=RST, y=FOM] {\schwingertablecond};
        
        \addlegendentry{GMRES}
        \addplot+ table [x=RST, y=GMRES] {\schwingertablecond};
        
        \addlegendentry{QFOM}
        \addplot+ table [x=RST, y=QFOM] {\schwingertablecond};

        \addlegendentry{QQGMRES}
        \addplot+ table [x=RST, y=QQGMRES] {\schwingertablecond};
        
        \addlegendentry{Interp}
        \addplot+ table [x=RST, y=InterpQQGMRES] {\schwingertablecond};
    \end{semilogyaxis}
  \end{tikzpicture}
  \caption{Convergence plots for the Schwinger model, $\alpha_{\min} \approx 0.11$, $N = 128^2$. Left: $m_0 = -0.1 > -\alpha_{\min}$, right: $m_0 = -0.22 < -\alpha_{\min}$.}\label{fig:schwinger}
\end{figure}
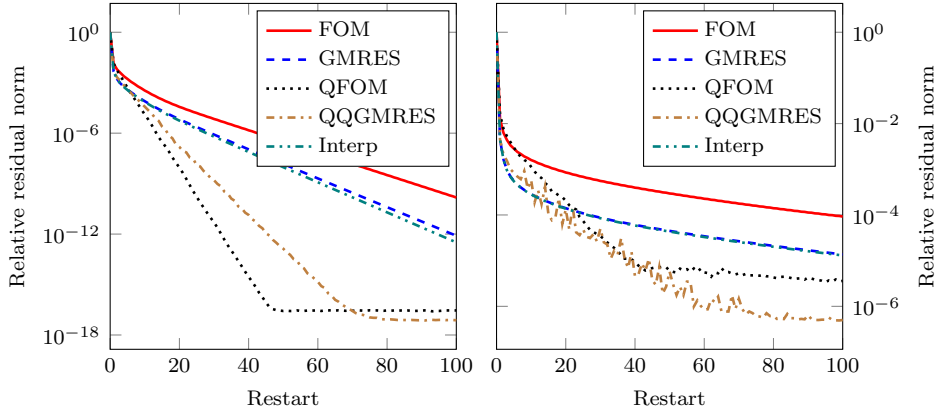

\bibliographystyle{siam}
\bibliography{qfom}

\end{document}